\documentclass[leqno,12pt]{article}

\usepackage{epsfig}

\usepackage{amsmath}
\usepackage{amscd}
\usepackage{amsopn}
\usepackage{amsthm}
\usepackage{amsfonts,amssymb}
\usepackage{amsfonts,bbm}
\usepackage{latexsym}

 \setlength{\textheight}{24cm}
\setlength{\topmargin}{-1cm}
\setlength{\textwidth}{15cm}
 \setlength{\hoffset}{-5.5mm}
 \setlength{\voffset}{-7mm}

% Numerierung der Gleichungen
\makeatletter
\@addtoreset{equation}{section}
\setcounter{secnumdepth}{3}

\makeatother

\newtheorem{lemma}{LEMMA}[section]
\newtheorem{proposition}[lemma]{PROPOSITION}
\newtheorem{corollary}[lemma]{COROLLARY}
\newtheorem{theorem}[lemma]{THEOREM}
\newtheorem{remark}[lemma]{REMARK}

\newtheorem{example}[lemma]{EXAMPLE}

\newtheorem{definition}[lemma]{DEFINITION}

%Zahlen
\newcommand{\real}{\mathbbm{R}}
\newcommand{\nat}{\mathbbm{N}}

% Summen, Limiten

%\newcommand{\sumnn}{\sum \limits_{n=0}^{\infty}}

%Griechisch
\renewcommand{\a}{\alpha}

\newcommand{\g}{\gamma}

\newcommand{\vp}{\varphi}
\newcommand{\ve}{\varepsilon}

%Verschiedenes
\newcommand{\reald}{{\real^d}}

\newcommand{\on}{\quad\text{ on }}
\newcommand{\und}{\quad\mbox{ and }\quad}
\newcommand{\inv}{^{-1}}
\newcommand{\ov}{\overline}

%Skript
\newcommand{\V}{\mathcal V}  
\newcommand{\W}{\mathcal W}  
\newcommand{\C}{\mathcal C}  

\newcommand{\F}{\mathcal F}

\renewcommand{\H}{{\mathcal H}}
\newcommand{\B}{\mathcal B}
 \newcommand{\Su}{\mathcal S}
\newcommand{\M}{\mathcal M}

\newcommand{\Q}{\mathcal Q}
\newcommand{\U}{{\mathcal U}}

% Operatoren

\newcommand{\itemframe}%
{\setlength{\parskip}{10pt}\begin{enumerate} \setlength{\topsep}{10pt}%
\setlength{\itemsep}{15pt}\setlength{\parsep}{5pt}}

\newcommand{\vx}{\ve_x}

  % {\overset{\,\circ}\nu{}}
   % {\overset{\,\circ}\mu{}}
  % {\overset{\,\circ}\ve{}_x}

\newcommand{\cc}{\mathfrak C}

\title{Darning and gluing of  diffusions}

\author{WOLFHARD HANSEN} 

\date{}   
%\today

\begin{document}

\maketitle

\begin{abstract}
We  study darning  of   compact sets (darning and gluing of finite unions of compact sets),
which are not thin at any of their points, in a~potential-theoretic framework which may be described,
analytically,  in terms of harmonic kernels/harmonic functions or,  probabilistically, in terms of a~diffusion.

This is accomplished without leaving our kind of setting so that the procedure can be iterated without any problem. 
It applies to darning and gluing of compacts in Euclidean spaces (manifolds) of different dimensions,
which is of  interest pertaining  to recent studies on heat kernels.

Keywords: Diffusion; Brownian motion; harmonic function; harmonic kernel; darning; gluing;
stable compact.

MSC: 60J60, 60J45, 60J65, 31A05, 31D05.

\end{abstract}

\section {Bauer diffusions and harmonic   kernels}\label{intro}

Darning of holes for symmetric Hunt processes by Dirichlet forms has been introduced in \cite{chen-fuku-book} 
and used in \cite{chen-fuku, chen-fuku-rohde}. 
In this paper, we shall  study darning  of   compacts (darning and gluing of finite unions of compacts),
which are not thin at any of their points, in a~potential-theoretic framework which may be described,
analytically,  in terms of harmonic kernels/harmonic functions or,  probabilistically, in terms of a~diffusion.\footnote{The 
author had the pleasure to listen to a talk by Zhen-Qing Chen on ``Browian motion
on spaces with varying dimensions'' (University of Bielefeld, February 2014) which motivated him
to try an approach  not using Dirichlet forms. 
 Having written most of this manuscript the author learnt that darning is also discussed in \cite{chen}
and will be the subject of \cite{chen-lou}.} 
We intend to accomplish this without leaving our kind of setting 
(for simplicity, we assume that we have a~base of regular sets and the constant function~$1$ is harmonic)
so that the procedure can be iterated without any problem. Our construction is of particular interest
with regard to recent studies of heat kernels. 

\begin{example}\label{simple-ex}
{
A basic example to start with is a~direct {\rm(}topological\,{\rm)} sum~$X$ of open sets $X_1, \dots,X_m$, $m\in\nat$, where each $X_j$ is
a~subset of some $\real^{d_j}$, $d_j\in\nat$, equipped with the classical harmonic structure {\rm(}Brownian motion{\rm)}
of dimension~$d_j$, and $K=K_1\cup\dots\cup K_m$, each~$K_j$ being a~compact {\rm(}in the simplest case, a~ball{\rm \,)} in $X_j$.
The aim is to identify $K$ with a~single point  $x_0$ and to define a~harmonic structure {\rm(}a diffusion{\rm)} 
on the new space such that, restricting on sets away from $K$,  we get what we had there before.
}
\end{example}

In the following, let $X$ be  a~locally compact (Hausdorff) space with countable base. 
Let us state right away that $X$ should also be locally connected; in fact, this is a~consequence 
of~$(H_4')$ below (see \cite[Satz 1.1.10]{Bauer66}).
Let $\U$ (resp.~$\U_c)$ denote the family of all open 
(resp.~relatively compact open) sets in $X$. For 
$\V\subset \U$ and $U\in \U$, let $\V(U)$ denote the set of all
$W\in \V$ such that $\ov W\subset U$.

Further, 
let $\B$ denote the set of all Borel sets in $X$
and let $\M$ denote the set of all measures on~$(X,\B)$.
 For  every $U\in \U$, let
$\B(U)$ (resp.~$\C(U)$) be the set of all $\B$-measurable (resp.\ continuous real)
functions on~$U$, and let $\C_0(U)$ be the set of all functions in $\C(U)$ which tend to zero
at $\partial U$.  As usual, given a~set $\F$ of functions, let~$\F^+$  (resp.~$\F_b$) denote
the set of all functions in $\F$ which are positive (resp.~bounded).   
Sometimes we shall tacitly identify functions 
on subsets $A$ of $X$ with functions on~$X$ taking the value $0$ on~$X\setminus A$.

A \emph{kernel on~$X$} is a~mapping $T\colon X\times \B\to [0,\infty]$ such that
$T(x,\cdot)\in \M$, $x\in X$,  and the functions $x\mapsto T(x,B)$, $B\in \B$, are $\B$-measurable. 
For every kernel~$T$ on~$X$, we define
\begin{equation*} 
                            Tf(x):=\int f(y) \,T(x,dy), 
\end{equation*} 
whenever $f\in \B(X)$, $x\in X$,  and the integral makes sense. 
For kernels $S,T$ on~$X$,  the kernel $ST$ is defined by $(S T)(x,\cdot):=\int T(y,\cdot)\,S(x,dy)$, that is,
\begin{equation*}
  (S T) f =S (T f), \qquad f\in \B^+(X).
\end{equation*} 
A kernel $T$ on~$X$ is  called a~\emph{boundary kernel
for~$U\in \U$} provided $T(x,X\setminus \partial U)=0$ for every  $x\in U$ and $T(x,\cdot)=\vx$
(Dirac measure at $x$) for every $x\in U^c$. 

\begin{example}\label{ex-diff}
If $\mathfrak X=(\Omega, X_t,\theta_t, \mathbbm P^x)$ is a~diffusion on~$X$ {\rm(}that is, a~Hunt process on~$X$
with continuous trajectories $t\mapsto X_t(\omega)${\rm)}, the \emph{exit kernels} ~$H_U$,  
$U\in \U$, given~by
\begin{equation}\label{diff-kernel}
           H_U(x,B):=\mathbbm P^x[X_{\tau_U}\in B, \tau_U<\infty], \qquad x\in X, B\in \B,
\end{equation} 
where $\tau_U:=\inf\{t\ge 0\colon X_t\in U^c\}$ denotes the \emph{exit time of~$U$},
are boundary kernels. 

These kernels are compatible, that is,
\begin{equation}\label{comp}
                 H_VH_U=H_U \qquad\mbox{ if  $V,U\in \U$  with $V\subset U$},
\end{equation} 
because of the trivial identity $\tau_U=\tau_V+ \tau_U\circ\theta_{\tau_V}$ and the strong Markov property.
If $U,V\in \U$ are disjoint, then obviously $H_V(x,\cdot)=H_{U\cup V}(x,\cdot)$ for every $x\in V$.
Let
\begin{equation*}
                               p_U(x):=\mathbbm E^x\tau_U, \qquad x\in U\in \U.
\end{equation*} 
If $p_U(x)<\infty$, then, of course, $H_U(x,X)=\mathbbm P^x[\tau_U<\infty]=1$.
\end{example}

\begin{definition}
A diffusion $\mathfrak X$ on~$X$ is called a~\emph{Bauer diffusion}, if there exists a~base $\V$ 
for the topology of $X$ such that for all $V\in \V $ the following holds. 
\begin{itemize}
\item[\rm $(D_1)$] The set $V$ is relatively compact and $p_V\in \C_0(V)$.
\item[\rm $(D_2)$]  For all $z\in \partial V$ and neighborhoods $W$ of $z$, $\lim_{x\to z} H_V(x,W^c)=0$,
and  $(H_Vf)|_{V}\in \C(V)$ for all $f\in \B_b(X)$. 
\end{itemize} 
\end{definition}

For an analytic discussion we shall consider a~Bauer family of harmonic boundary kernels
(cf.\ \cite[VIII.1]{BH}).
For the moment,  let us assume we have a~base  $\V\subset \U_c$  for the topology of $X$ and, for every $V\in \V$,
 a~boundary kernel $H_V$. 
Let $U\in \U$ and  let~$\W$ be  a~\emph{local choice from~$\V$ in~$U$},
that is, a~mapping $\W\colon x\mapsto \W(x)$, $x\in U$, where each~$\W(x)$ is a~fundamental system of neighborhoods $W\in \V(U)$ of $x$.
  Then a~lower semicontinuous
function $u\colon U\to \left]-\infty,\infty\right] $ is  called \emph{$\W$-hyperharmonic on~$U$}~if 
\begin{equation*} 
                                H_Wu(x)\le u(x) \qquad \mbox{ for all }x\in U\mbox{ and }W\in \W(x), 
\end{equation*} 
and the set of all $\W$-hyperharmonic functions on~$U$ is denoted by  $\H_{\W}^\ast(U)$. 

If $\W(x)$ is the set of \emph{all} $W\in \V(V)$ such that $x\in W$,
we drop the subscript $\W$ and have the set $\H^\ast(U)$  of \emph{hyperharmonic functions on~$U$}.
The set of all \emph{harmonic functions on~$U$} is defined by
\begin{equation}\label{def-harmonic}
        \H(U):=\{h\in \C(U)\colon H_Vh=h 
\mbox{ for }
V\in \V(U)\}=\H^\ast(U)\cap (-\H^\ast(U)). 
\end{equation}

\begin{definition}\label{hake}
Let $\V\subset \U_c$ be a~base for the topology of $X$ and let $H_V$, $V\in \V$,
be boundary kernels for $V$. Then
the family  $(H_V)_{V\in \V}$ is a~\emph{normal\footnote{The attribute``normal'' refers to $H_V1=1$.} 
Bauer family of  harmonic boundary kernels on~$X$}
 provided the following holds for every $V\in \V$.
\begin{itemize}
\item[\rm $(H_1)$] 
If $W\in \V$ and $W\subset V$, then
$  H_W H_V=H_V$.
\item [\rm $(H_2)$] 
For every $f\in \C(X)$, $H_Vf\in \C(X)$,  and $H_V1=1$. 
\item[\rm $(H_3)$] There exists a~local choice $\W$ from~$\V$ on~$V$
 such that ${}^+\H_{\W}^\ast(V)$ separates the points of $V$.
\item[\rm  $(H_4)$] For all   $f\in \B_b(X)$, the function $H_V f$ is continuous on~$V$. 
\end{itemize} 
\end{definition}

Having Corollary \ref{nearly} below it is easily established  that $(H_4)$ amounts to the \emph{Bauer convergence property}: 
\begin{itemize}
\item[\rm$(H_4')$] For every $U\in \U$ and every  bounded sequence $(h_n)$ in $\H(U)$ which is increasing, $\sup h_n\in \H(U)$.
\end{itemize}

For a~converse of the next result see Theorem \ref{kernel-to-diff}.

\begin{proposition}\label{diff-hake}
Let $\mathfrak X$ be  Bauer diffusion on~$X$ and $\V\subset \U_c$ be a~base for the topology of $X$
such that the exit kernels $H_V$, $V\in\V$,  satisfy $(D_1)$ and $(D_2)$. Then,
for all $U,V\in \V$ with $\ov U\subset V$,
\begin{equation}\label{huv}
 p_V, H_U p_V\in \H^\ast(V) \und p_V-H_Up_V=p_U>0 \mbox{ on }U.
\end{equation} 
In particular,  $(H_V)_{V\in \V}$ is a~normal Bauer family of harmonic boundary kernels.
\end{proposition}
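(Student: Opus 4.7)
The plan is to obtain everything from continuity of trajectories, the strong Markov property, and the two diffusion conditions $(D_1)$, $(D_2)$. For $U,V\in\V$ with $\ov U\subset V$, continuity of trajectories gives $\tau_U\le\tau_V$ and $\tau_V=\tau_U+\tau_V\circ\theta_{\tau_U}$ $\mathbbm P^x$-a.s.\ for $x\in U$; taking $\mathbbm E^x$ and using the strong Markov property yields $p_V-H_Up_V=p_U$ on~$U$, while off~$U$ both sides vanish (since $\tau_U=0$ and $H_U(x,\cdot)=\vx$ there). Continuity of paths starting in the open set~$U$ forces $\tau_U>0$ a.s., whence $p_U>0$ on~$U$. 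Hyperharmonicity of $p_V$ on~$V$ is then immediate: $p_V\in\C_0(V)$ by~$(D_1)$, and the same identity, with $W\in\V$, $\ov W\subset V$, in place of~$U$, gives $H_Wp_V=p_V-p_W\le p_V$ on~$W$.

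The key technical step is hyperharmonicity of $H_Up_V$ on~$V$. Continuity follows from $H_Up_V=p_V-p_U\mathbbm 1_U$ together with $p_U\in\C_0(U)$ and $\ov U\subset V$. For $W\in\V$ with $\ov W\subset V$ and $x\in W$, I would split cases. If $x\in W\setminus U$, then $H_Up_V(x)=p_V(x)$ and
\begin{equation*}
H_WH_Up_V(x)\le H_Wp_V(x)\le p_V(x)=H_Up_V(x).
\end{equation*}
If $x\in W\cap U$, I would apply the strong Markov property at $\sigma:=\tau_U\wedge\tau_W$ via $\tau_U=\sigma+\tau_U\circ\theta_\sigma$; on $\{\tau_U<\tau_W\}$ one has $X_{\tau_U}\in W\cap U^c$ (where the previous case applies), and the contributions on $\{\tau_W\le\tau_U\}$ cancel, leaving
\begin{equation*}
H_Up_V(x)-H_WH_Up_V(x)=\mathbbm E^x\bigl[\,(H_Up_V-H_WH_Up_V)(X_{\tau_U})\,\mathbbm 1_{\tau_U<\tau_W}\bigr]\ge 0.
\end{equation*}
This two-stage Markov argument is the main obstacle: when $W$ straddles $\partial U$, no single compatibility identity applies, and one must bootstrap from the easier case $x\in W\setminus U$.

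The four axioms then follow quickly. Compatibility $(H_1)$ is~\eqref{comp}, and $(H_4)$ is part of~$(D_2)$. For $(H_2)$, continuity of $H_Vf$ for $f\in\C(X)$ holds on~$V$ by~$(H_4)$, holds on~$V^c$ because $H_Vf=f$ there, and extends across $\partial V$ by using $\lim_{x\to z}H_V(x,W^c)=0$ from~$(D_2)$ together with continuity of~$f$; the normalisation $H_V1=1$ follows from $p_V<\infty$, hence $\tau_V<\infty$ $\mathbbm P^x$-a.s. For $(H_3)$, the family $\{p_V\}\cup\{H_Wp_V\colon W\in\V,\;\ov W\subset V\}\subset{}^+\H^\ast(V)$ separates points of~$V$: given $x\ne y$ in~$V$, pick $W\in\V$ with $x\in W$ and $y\notin\ov W$; then either $p_V$ already separates $x,y$, or $H_Wp_V(x)=p_V(x)-p_W(x)<p_V(x)=H_Wp_V(y)$.
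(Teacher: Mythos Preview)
Your proof is correct and follows the same overall skeleton as the paper's: derive $p_V-H_Up_V=p_U$ from $\tau_V=\tau_U+\tau_V\circ\theta_{\tau_U}$ and the strong Markov property, read off hyperharmonicity of $p_V$, then hyperharmonicity of $H_Up_V$, and finally separate points using $p_V$ together with one $H_Wp_V$.

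The only substantive difference is in what you call ``the main obstacle'', the hyperharmonicity of $H_Up_V$. Your two-stage argument (first handle $x\in W\setminus U$, then for $x\in W\cap U$ stop at $\sigma=\tau_U\wedge\tau_W$ and reduce to the previous case) is correct, but the paper dispatches this in a single line without any case distinction. Writing $H_Up_V=p_V-p_U=\mathbbm E^{\,\cdot}(\tau_V-\tau_U)$ on all of~$V$, the strong Markov property at $\tau_W$ gives, for $x\in W\in\V(V)$,
\begin{equation*}
H_WH_Up_V(x)=\mathbbm E^x(\tau_V-\tau_W)-\mathbbm E^x(\tau_U\circ\theta_{\tau_W}),
\end{equation*}
and the universal inequality $\tau_U\le\tau_W+\tau_U\circ\theta_{\tau_W}$ (valid regardless of whether $\tau_U<\tau_W$ or not) yields $H_WH_Up_V(x)\le p_V(x)-p_U(x)=H_Up_V(x)$ immediately. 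So the obstacle you identify is real, but the paper's route around it is shorter: rather than bootstrapping from the easy case, it absorbs both cases into a single pathwise inequality for exit times. Your approach has the minor virtue of never invoking an inequality that mixes $U$ and $W$; the paper's has the major virtue of brevity.
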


\begin{proof} Clearly,  $(H_1)$, $(H_2)$ and $(H_4)$ hold. 
Let us fix  $U,V\in \V$ such that $\ov U\subset V$.
If $x\in W\in \V(V)$, then, by the strong Markov property, 
\begin{equation*} 
H_Wp_V(x)=\mathbbm E^x[\mathbbm E^{X_{\tau_W}}\tau_V]
=\mathbbm E^x(\tau_V\circ \theta_{\tau_W})=\mathbbm E^x(\tau_V-\tau_W)= p_V(x)-p_W(x).
\end{equation*} 
Therefore $p_V\in  \H^\ast(V)$ and (taking $W=U$), 
\begin{equation} \label{HUp}
H_Up_V(y) =\mathbbm E^y (\tau_V-\tau_U)=p_V(y)-p_U(y)\qquad\mbox{ for every } y\in V
\end{equation} 
(the equality holds trivially for $y\in V\setminus U$). So, by the strong Markov property, 
\begin{equation*} 
H_W H_Up_V(x)=\mathbbm E^x[\mathbbm E^{X_{\tau_W}}(\tau_V-\tau_U)] =\mathbbm E^x(\tau_V-\tau_W) -\mathbbm E^x(\tau_U\circ \theta_{\tau_W})
\end{equation*} 
for all $x\in W\in \V(V)$. Since $\tau_U\le \tau_W+\tau_U\circ \theta_W$, we see that $H_WH_Up_V(x)\le H_Up_V(x)$. 
Knowing that $H_Up_V\in \C_0(V)$, by $(H_2)$, we conclude that    $H_Up_V\in \H^\ast(V)$.

Finally, let $x,y\in V$, $x\ne y$. We choose $U\in \V(V)$  such that  $x\notin U$, $y\in U$.  Then
 $H_Up_V(x) =p_V(x)$ and $H_Up_V(y)<p_V(y)$, by (\ref{HUp}). So $p_V(x)\ne p_V(y)$ or~$H_Up_V(x)\ne H_Up_V(y)$. Thus $(H_3)$ holds
and the proof is finished. 
\end{proof}

In the remaining part of this section we shall present some facts on harmonic kernels and harmonic
functions which, in essence, are taken from \cite{Bauer66,BH,GH1,H-pert,H-strict,H-coupling} 
(the reader, who  to some extent is familiar with general potential theory, may directly pass to the next section and return to   
  this section as needed).

The weak separation property $(H_3)$ is ideal for capturing examples (if $U$ is an open set in $\real^n$, $\W(x)$ may
consist of balls centered at $x$) and it is strong enough for a~minimum principle (Proposition \ref{min-principle})
which, having $(H_1)$ and $(H_2)$,  implies that, in fact,  $\H_{\W}^\ast(U)=\H ^\ast(U)$
even for all $U\in \U$ and all local choices $\W$ from $\V$ on $U$ (Corollary~\ref{nearly}).

For the moment, let us only assume that $(H_V)_{V\in \V}$ satisfies $(H_1)$ and $(H_2)$. 
Of course, $(H_Vf)|_V\in \H(V)$ for every $f\in \C(X)$ (even for every $f\in \B_b(X)$ provided $(H_4)$ holds).

\begin{proposition} \label{min-principle}
Let $U\in\U_c$ and let $\W$ be a~ local choice  from $\V$ on $U$.
Suppose that ${}^+\H_{\W}^\ast(U)$ separates the points of $U$,
and let $v\in \H_{\W}^\ast(U)$ such that $\liminf_{x\to z} v(x)\ge 0$
for every $z\in \partial U$. Then $v\ge 0$ on $U$.
\end{proposition}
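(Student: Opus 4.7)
My plan is to extend $v$ lower semicontinuously to the compact set $\ov U$, so that it attains its minimum there, and then to use $(H_3)$ together with a~Zorn argument to shrink the minimum set down to a~single point, contradicting the fact that each $H_W$ is a~boundary kernel.

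Define $\tilde v$ on $\ov U$ by $\tilde v(z):=\liminf_{x\to z}v(x)$ for $z\in\partial U$; this is lower semicontinuous on $\ov U$, so it attains its minimum~$m$. Since $\tilde v\ge 0$ on $\partial U$ by hypothesis, I~may assume $m<0$ and aim at a~contradiction; the set $A:=\{\tilde v=m\}$ is then a~non-empty compact subset of $U$. For $x\in A$ and $W\in\W(x)$ (so $\ov W\subset U$), $\W$-hyperharmonicity gives $H_Wv(x)\le m$, while $v\ge m$ on $\partial W$ and $H_W1=1$ (from $(H_2)$) yield $H_Wv(x)\ge m$; equality forces $v=m$ $H_W(x,\cdot)$-a.e.\ on $\partial W$, and hence $H_W(x,A)=1$. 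Let me call a~non-empty compact $B\subset U$ \emph{stable} if $H_W(x,B)=1$ whenever $x\in B$ and $W\in\W(x)$; then $A$ is stable.

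Next I~apply Zorn's lemma to the stable subsets of $A$ ordered by reverse inclusion. For a~chain $(B_\alpha)$, the intersection is compact and non-empty by the finite intersection property; second countability of $X$ makes the open cover $(B_\alpha^c)$ of $\bigcup_\alpha B_\alpha^c$ admit a~countable sub-cover, so countable subadditivity of $H_W(x,\cdot)$ yields stability of $\bigcap_\alpha B_\alpha$. Let $A_\infty\subset A$ be a~minimal stable set. If $|A_\infty|\ge 2$, $(H_3)$ produces $u\in {}^+\H_{\W}^\ast(U)$ non-constant on $A_\infty$, and $A':=\{x\in A_\infty\colon u(x)=\min_{A_\infty}u\}$ is then a~proper, closed, non-empty subset of $A_\infty$. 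Re-running the minimum-propagation argument with $u$ in place of $v$ (using that, by stability, $H_W(x,\cdot)$ for $x\in A'$ is already concentrated on $A_\infty$, on which $u\ge \min_{A_\infty}u$) shows $A'$ is itself stable, contradicting minimality. Hence $A_\infty=\{x_\ast\}$ is a~singleton; but stability then demands $H_W(x_\ast,\{x_\ast\})=1$, while $x_\ast\in W$ and $H_W(x_\ast,X\sms\partial W)=0$ force $H_W(x_\ast,\{x_\ast\})=0$, a~final contradiction.

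The delicate point is the shrinking step: verifying that the minimum-propagation argument transfers from $v$ on $A$ to a~positive hyperharmonic function $u$ on any stable set, so that the separation hypothesis in $(H_3)$ can be leveraged through Zorn's lemma to reduce the stable set to a~singleton. A~secondary technical issue is the Zorn chain hypothesis for uncountable chains, which is handled using the countable base of $X$.
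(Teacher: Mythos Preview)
Your argument is correct and is essentially a self-contained reproduction of Bauer's general minimum principle, which the paper simply cites (\cite{bauer-silov}, \cite[Korollar~1.3.7]{Bauer66}) rather than proving. The structure---extend $v$ lower semicontinuously to the compact $\ov U$, show that the (assumed negative) minimum set is ``absorbing'' for the kernels $H_W$, use Zorn to extract a minimal absorbing compact, and then invoke point-separation to force it to be a singleton, contradicting that $H_W(x_\ast,\cdot)$ lives on $\partial W\not\ni x_\ast$---is exactly Bauer's argument.

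One small correction of attribution: when you write ``$(H_3)$ produces $u\in{}^+\H_{\W}^\ast(U)$ non-constant on $A_\infty$'', you are actually using the \emph{hypothesis of the proposition itself} (that ${}^+\H_{\W}^\ast(U)$ separates the points of $U$), not axiom $(H_3)$; at this stage of the paper only $(H_1)$ and $(H_2)$ are assumed globally, and the separation for this particular $U$ and $\W$ is part of the statement. This is only a labeling issue and does not affect the mathematics. Your handling of the Zorn chain via Lindel\"of/second countability and countable continuity of the probability measure $H_W(x,\cdot)$ is fine, as is the propagation step on $A_\infty$ (the point being that $H_W(x,\cdot)$ is already carried by $A_\infty$, so the inequality $u\ge\min_{A_\infty}u$ is all that is needed).
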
 

\begin{proof} Immediate consequence of Bauer's  general minimum principle
(see \cite{bauer-silov} and \cite[p.\,7 and Korollar 1.3.7]{ Bauer66}).
\end{proof} 

\begin{corollary}
\label{nearly}
Let $U\in \U$ and suppose that there exists a~local choice $\W$ from~$\V$ on~$U$
such that ${}^+\H_{\W}^\ast(U)$   separates the points of~$U$. Then, for \emph{every} 
local choice~$\W$ from~$\V$ on~$U$,
\begin{equation*}
\H_{\W}^\ast(U)=\H^\ast(U).
\end{equation*}
In particular, ${}^+\H^\ast(U)$ separates the points of $U$.
\end{corollary}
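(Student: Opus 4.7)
The plan is to reduce the claim to a single inclusion: for every local choice $\W$ from $\V$ on $U$, $\H_\W^\ast(U)\subset \H^\ast(U)$. The reverse inclusion is immediate, since the maximal local choice $\W^{\max}(x):=\{W\in \V(U):x\in W\}$ underlying $\H^\ast(U)$ pointwise contains any $\W(x)$, so imposes more conditions. I intend to treat first the distinguished local choice $\W_0$ of the hypothesis: once $\H^\ast(U)=\H_{\W_0}^\ast(U)$ is established, the separation of points of $U$ by ${}^+\H^\ast(U)$ follows, and this can then be leveraged to handle an arbitrary $\W$.

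Fix $u\in \H_\W^\ast(U)$ and $V\in \V(U)$; it suffices to show $H_V u\le u$ on $V$ (elsewhere both sides coincide). Because $u$ is l.s.c.\ on the compact $\ov V\subset U$, it is bounded below there, and $u|_{\ov V}$ is the pointwise supremum of an increasing sequence in $\C(\ov V)$, extendable to $X$ via Tietze. Monotone convergence reduces the task to showing $H_V f\le u$ on $V$ for every $f\in \C(X)$ with $f\le u$ on $\partial V$. Set $w:=u-H_V f$ on $V$. By $(H_2)$, $H_V f\in \C(X)$ and equals $f$ on $\partial V$, so $w$ is l.s.c.\ on $V$ with $\liminf_{y\to z} w(y)\ge u(z)-f(z)\ge 0$ for $z\in \partial V$. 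For $y\in V$ define $\W'(y):=\{W\in \W(y):\ov W\subset V\}$; this is a fundamental system of neighborhoods of $y$ inside $V$ and its elements lie in $\V(V)$. For $W\in \W'(y)$, $(H_1)$ gives $H_W(H_V f)=H_V f$, whence $H_W w(y)=H_W u(y)-H_V f(y)\le u(y)-H_V f(y)=w(y)$. Thus $w\in \H_{\W'}^\ast(V)$.

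To apply Proposition \ref{min-principle} on $V$ and conclude $w\ge 0$, we must exhibit separation of points of $V$ by ${}^+\H_{\W'}^\ast(V)$. In the case $\W=\W_0$, any $h\in {}^+\H_{\W_0}^\ast(U)$ restricts to ${}^+\H_{\W_0'}^\ast(V)$: for $W\in \W_0'(y)$ the kernel $H_W$ is supported in $\ov W\subset V$, so $H_W h(y)=H_W(h|_V)(y)\le h(y)$; thus the separation assumption on $U$ transfers to $V$. The minimum principle yields $w\ge 0$, so $H_V f\le u$ on $V$, and passing to the supremum over $f$ gives $H_V u\le u$ on $V$. This establishes $\H_{\W_0}^\ast(U)=\H^\ast(U)$, and in particular that ${}^+\H^\ast(U)$ separates the points of $U$. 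For a general local choice $\W$, the very same restriction argument---now applied to elements of ${}^+\H^\ast(U)$ (noting $W\in \V(V)\Rightarrow W\in \V(U)$)---shows that ${}^+\H^\ast(V)$ separates the points of $V$; since ${}^+\H^\ast(V)\subset {}^+\H_{\W'}^\ast(V)$, separation is in place, the minimum principle applies, and we conclude $u\in \H^\ast(U)$.

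The main obstacle is that Proposition \ref{min-principle} requires the positive hyperharmonic functions to separate points for the very local choice used to test the function $w$, whereas our hypothesis only provides separation for $\W_0$. The two-step bootstrap---first using the case $\W=\W_0$ to promote the separation from $\W_0$ to the maximal local choice, then deploying it for an arbitrary $\W$---circumvents this. The remaining ingredients, namely $(H_1)$ for the invariance $H_W H_V f=H_V f$ and $(H_2)$ for the boundary continuity of $H_V f$, combine to ensure that $w$ is a legitimate test function and that the approximation of $u|_{\partial V}$ by continuous functions goes through.
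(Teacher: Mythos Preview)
Your proof is correct and follows essentially the same approach as the paper: approximate $u$ from below by continuous functions on $\ov V$, use $(H_1)$ to see that $u-H_Vf$ is $\W'$-hyperharmonic on $V$, and apply the minimum principle. The only organizational difference is that the paper states part (a) generically for \emph{any} local choice $\W$ whose positive cone separates points, and then in part (b) simply observes that, once ${}^+\H^\ast(U)$ is known to separate, so does ${}^+\H_{\W}^\ast(U)\supset {}^+\H^\ast(U)$ for an arbitrary $\W$---so (a) applies again verbatim; you instead repeat the restriction-to-$V$ argument explicitly, which amounts to the same thing.
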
 

\begin{proof}[Proof {\rm(cf.\ the proof of {\cite[Satz 1.3.8]{Bauer66}})}]Let  $\W$ be a~local choice from $\V$ on $U$. 

(a) We  suppose first that   ${}^+\H_{\W}^\ast(U)$ separates the points of $U$. Let $u\in  \H_{\W}^\ast (U)$
 and  $V\in \V(U) $.
There are functions $f_n\in \C(\ov V)$, $n\in\nat$,  with $f_n\uparrow u$
on $\ov V$ as $n\to\infty$. For the moment, let us fix $n\in\nat$. By $(H_1)$, 
\begin{equation*} 
v_n:=(u-H_Vf_n)
\end{equation*}
 satisfies
$H_Wv_n(x)\le v_n(x)$ for all $x\in V$ and $W\in \W(x)$ with $\ov W\subset V$. 
Moreover,  $v_n$~is lower semicontinuous on $\ov V$ and $v_n=u-f_n\ge 0$ on $\partial V$. 
Hence $v_n\ge 0$ on~$V$, by Proposition~\ref{min-principle}.   Letting $n\to\infty$ we obtain that $u-H_Vu\ge 0$.
So $u\in \H^\ast(U)$. In particular, ${}^+\H^\ast(U)$ separates the points of $U$.

(b) Now let $\W$ be an arbitrary  local choice from $\V$ on $U$.
Trivially $\H^\ast(U)$  is contained in $\H_{\W}^\ast(U)$, 
and  we finally conclude, by (a),  that $\H_{\W}^\ast(U)=\H^\ast(U)$.
\end{proof}

From now on let us suppose that $(H_1)$ -- $(H_4)$ are satisfied.
By Corollary~\ref{nearly},(ii), we immediately obtain 
that $\H:=\{\H(U)\colon U\in \U\}$ is a~\emph{sheaf of continuous functions on~$X$}, that is, for every $U\in \U$,
the set $\H(U)$ is a~linear subspace of $\C(U)$ and the following hold.
\begin{itemize}
\item[\rm (i)] If $U'\in \U$, $U'\subset U$ and $h\in \H(U)$, then $h|_{U'}\in  \H(U')$.
\item[\rm (ii)] If $U=\bigcup_{i\in I} U_i$, $U_i\in \U$,  and $h\in \B(U)$ such that $h|_{U_i}\in \H(U_i)$ for every 
$i\in I$, then $h\in \H(U)$.
\end{itemize} 
Moreover,  for all $U\in \V$ and $f\in \C(X)$,
the function $H_Uf$ is harmonic on~$U$, by $(H_1)$ and $(H_2)$; by Proposition \ref{min-principle},
it is the only function which is harmonic on~$U$ and equal to $f$ on~$U^c$.

A set $U\in \U_c$ is called ($\H$-)\emph{regular}, if every function $f\in \C(\partial U)$ possesses
a~unique continuous extension $H_f^U\in\C(\ov U)$ which is harmonic on~$U$, and if $H_f^U\ge 0$ 
for~$f\ge 0$. Let $\U_r$ denote
the family of all regular sets. By the preceding observation, $\V\subset \U_r$ and, for all $U\in \V$ 
and $f\in \C(X)$, 
\begin{equation}\label{V-reg}
   H_{f|_{\partial U}}^U=H_Uf \on \ov U.
\end{equation} 
For every $U\in \U_r$, the mapping $f\mapsto H_f^U$ defines a~boundary kernel which,
in view of (\ref{V-reg}), may be denoted by $H_U$. By (i), it is trivial that $H_VH_Uf=H_Uf$
for all $U,V\in \U_r$ with $V\subset U$ and $f\in \C(X)$. Therefore
\begin{equation*} 
           H_VH_U=H_U,\qquad\mbox{ whenever } U,V\in \U_r \mbox{ and } V\subset U.
\end{equation*} 
If $U_1,U_2\in \U_c$  are disjoint and $U_1\cup U_2$ is regular, then obviously both $U_1$ and $U_2$
are regular and 
\begin{equation}\label{comp-reg}
           H_{U_j}(x,\cdot)=H_U(x,\cdot)\qquad \mbox{ for every }x\in U_j, \, j=1,2.
\end{equation} 

A set $U\in \U$ is called a~\emph{$\mathcal P$-set} if ${}^+\H^\ast(U)$ 
separates the points of $U$ (see Proposition~\ref{equiv-potential} for the justification of the prefix ``$\mathcal P$'').
By Corollary \ref{nearly}, every $V\in \V$ is a~$\mathcal P$-set and the sets of functions $\H^\ast(U)$ and $\H(U)$,
$U\in \U$, are not changed if we replace~$\V$ by the family of \emph{all} regular $\mathcal P$-sets.

Clearly, every open subset of a~$\mathcal P$-set is \hbox{a~$\mathcal P$-set}.
Moreover, every union of two disjoint $\mathcal P$-sets is a~$\mathcal P$-set.

For our darning we need the \emph{barrier criterion for regular sets}.  
If $V\in \U_c$ and~$\ov V$ is contained in  a~$\mathcal P$-set, then $V$ is regular if (and only if),
for every $z\in \partial V$, there exists an open  neighborhood $W_z$ of $z$ and a~strictly positive function $s\in \H^\ast(V\cap W_z)$
such that $\lim_{x\to z} s(x)=0$ (see \cite[Satz 4.3.3]{Bauer66}). In Example \ref{ex-diff}, 
this means that $T_{V^c}:=\inf\{t>0\colon X_t\in V^c\}=0$ $\mathbbm P^z$-a.s.\ (see \cite[VI.4.16]{BH}). 

In particular, for any~$\mathcal P$-set $W$,  
the intersection of any two regular sets with closure in~$W$ is regular.
So our pair $(X,\H)$  not only satisfies  (i), (ii) and the Bauer convergence property holds, 
but there also exists a~base $\tilde \V$  for $\U$ 
consisting of $\H$-regular sets which is stable under finite intersections, that is,
$(X,\H)$ is a~\emph{Bauer space} (see \cite[\S 3.1]{Const} and \cite[7.1]{GH1}).

It can be shown that, conversely, for any   Bauer space $(X,\H)$ with $1\in \H(X)$,
there exists a~base $\V$ of regular sets such that the corresponding kernels $H_V$,
$V\in \V$, form a~normal Bauer family of harmonic kernels such that, for every $U\in \U$,
$\H(U)$ is the set of all corresponding harmonic functions on~$U$
(cf.~\cite[Section VIII.1]{BH}). 

We refer the reader to \cite[Section VIII]{BH} and \cite[Section 7.1]{GH1} for  examples given by
\begin{equation*} 
\H(U):=\{h\in \C(U)\colon Lh=0\}, \qquad U\mbox{ open set in }X ,
\end{equation*} 
where $L$ is a~partial differential operator of second order on an open set $X$  in $\real^d$.

Let $U\in \U$. A~function $s\in \H^\ast(U)$  is \emph{superharmonic on~$U$} if, for every $V\in \V(U)$,  
 the function~$H_Vs$ is  locally bounded on~$V$
(and hence, by $(H_4)$, is harmonic on~$V$).  Obviously, every locally bounded hyperharmonic function on~$U$
is superharmonic. Let $\mathcal S(U)$ denote the set of all superharmonic functions on~$U$.

A function  $s\in \mathcal S^+(U)$  is called \emph{potential on~$U$} if $0$
is the largest harmonic minorant of $s$ on~$U$. Let  $\mathcal P(U)$ denote the set of all 
potentials on~$U$. Every function $s\in \Su^+(U)$ admits a unique decomposition $s=h+p$ such that $h\in \H^+(U)$
and $p\in \mathcal P(U)$ (Riesz decomposition). A~potential $p$ on~$U$ is called \emph{strict}, if any two measures 
$\mu,\nu$ on~$U$ coincide provided $\int p\,d\mu=\int p\,d\nu<\infty$ and 
$\int v\,d\mu\le \int v\,d\nu$ for every $v\in {}^+\H^\ast(U)$. Of course, every strict potential
is strictly positive.
The following well known equivalences justify our notion of a~$\mathcal P$-set. 
\begin{proposition}\label{equiv-potential}
For every $U\in \U$, the following properties are equivalent.
\begin{itemize}
\item
 $U$ is a $\mathcal P$-set {\rm(}that is, ${}^+\H^\ast(U)$ separates the points of $U${\rm)}.
\item
There exists a  potential $p>0$ on~$U$.
\item
There exists a strict continuous real potential on~$U$.
\end{itemize} 
\end{proposition}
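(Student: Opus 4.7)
The plan is to prove the cycle $(3)\Rightarrow(2)\Rightarrow(1)\Rightarrow(3)$.

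The implication $(3)\Rightarrow(2)$ is immediate: the remark preceding the proposition notes that every strict potential is strictly positive, and hence already witnesses~(2).

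For $(2)\Rightarrow(1)$, suppose $p>0$ is a~potential on $U$, and fix distinct $x,y\in U$. The goal is to produce $v\in{}^+\H^\ast(U)$ with $v(x)\ne v(y)$. If $p(x)\ne p(y)$ take $v=p$, so assume $p(x)=p(y)$. Choose a~regular $V\in\V$ with $\ov V\subset U$, $x\in V$, $y\notin\ov V$, and let $q$ be the sweeping of~$p$ onto~$V^c$, i.e.\ $q=H_Vp$ on $V$ and $q=p$ on $V^c$. Standard balayage arguments give $q\in\Su^+(U)$ with $q\le p$ and $q(y)=p(y)=p(x)$. If $q(x)=p(x)$ as well, the minimum principle applied to the nonnegative superharmonic function $p-H_Vp$ on $V$ forces $H_Vp=p$ on the connected component of $x$ in~$V$, so $p$ is harmonic there. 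Iterating by varying $V$ over a~fundamental system of regular neighborhoods of $x$ and by replacing $p$ with reduites $\hat R_p^K$ of~$p$ on compacts $K\subset U$ (which are again potentials dominated by $p$), one shows that absence of any separating~$v$ yields a~nonzero positive harmonic minorant of~$p$ on~$U$, contradicting the defining property of a~potential.

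The implication $(1)\Rightarrow(3)$ is the substantive direction and the main obstacle. The discussion preceding the proposition already establishes that $(X,\H)$ is a~Bauer space and that our $U\in\U$, being a~$\mathcal P$-set, inherits the relevant axioms. On such a~space the existence of a~strict continuous real potential is the classical theorem of Mokobodzki; I would simply invoke it, referring to \cite[\S 3.1]{Const} and \cite[Section VIII.1]{BH}. The heart of the construction is to extract a~countable separating family $(u_n)\subset{}^+\H^\ast(U)$, pass to bounded continuous potential parts $p_n$ via truncation and Riesz decomposition, and form a~weighted series $p:=\sum 2^{-n}p_n$; verifying continuity, strict positivity, and the measure-strictness condition crucially uses the Bauer convergence property~$(H_4')$. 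This is the only nontrivial ingredient of the proof, and the present framework has been set up precisely so that the existing axiomatic argument applies verbatim.
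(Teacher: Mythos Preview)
The paper does not prove this proposition at all: it is introduced with ``The following well known equivalences justify our notion of a~$\mathcal P$-set'' and left without argument, the implicit reference being to the standard sources \cite{Bauer66,BH,Const} listed earlier. So your decision to cite the literature for the hard direction $(1)\Rightarrow(3)$ is exactly in the spirit of the paper, and $(3)\Rightarrow(2)$ is indeed trivial.

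The gap is in your $(2)\Rightarrow(1)$. You write that if $q(x)=p(x)$, ``the minimum principle applied to the nonnegative superharmonic function $p-H_Vp$ on $V$ forces $H_Vp=p$ on the connected component of $x$ in~$V$''. This is the \emph{strong} minimum principle (a nonnegative superharmonic function vanishing at an interior point vanishes on the whole component), which is equivalent to ellipticity of the harmonic sheaf. The present paper does \emph{not} assume ellipticity in Section~1; indeed, ellipticity is introduced only as an additional hypothesis near the end of Section~2 and in Section~5, and the paper explicitly exhibits a non-elliptic example (the half-space with Brownian motion continuing on the boundary hyperplane). The only minimum principle available here is the boundary minimum principle of Proposition~\ref{min-principle}, which says nothing about interior zeros. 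Your subsequent ``iterating by varying $V$ \dots\ one shows that absence of any separating~$v$ yields a~nonzero positive harmonic minorant'' is too vague to repair this; it is not clear how local harmonicity near~$x$ propagates to a global harmonic minorant without further structure.

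Since the paper itself is content to quote the equivalence from the literature, the cleanest fix is to do the same for $(2)\Rightarrow(1)$ as well, pointing for instance to \cite[III.1.2]{BH} or \cite[\S2.3 and \S6.1]{Const}, where the implication is established in the Bauer-space setting without any ellipticity assumption.
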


If $U,V\in \U$ and $p_U\in \mathcal P(U)$, $p_V\in \mathcal P(V)$, then $p_U$ and $p_V$
are called \emph{compatible}, if the difference $p_U-p_V$ is harmonic on~$U\cap V$.

Finally, let us suppose once more that, in  Example \ref{ex-diff},  we have a Bauer diffusion~$\mathfrak X$
and a base  $\V$ such that the exit kernels $H_V$, $V\in \V$, satisfy $(D_1)$ and $(D_2)$.
We know, by Proposition \ref{diff-hake}, that  $(H_V)_{V\in \V}$ is a normal  Bauer family of harmonic boundary kernels.
Moreover, we now see that the functions $p_V\in \H^\ast(V)\cap C_0(V)$, $V\in \V$, are  potentials
which are compatible, since, by (\ref{huv}),  $p_U-p_V=H_W(p_U-p_V)$ whenever $U,V,W\in \V$ such that $\ov W\subset U\cap V$.
It can be shown that every $p_V$ is a~strict potential   (cf.\ \cite[VI.7.8]{BH}). 

By \cite{H-pert}  in combination with \cite{H-strict}, which provides a local characterization of the fine support
of continuous real potentials, the  following converse holds. 

\begin{theorem}\label{kernel-to-diff}
Let $(H_V)_{V\in \V}$ be a normal Bauer family of harmonic kernels on~$X$ and let
$(p_V)_{V\in \V}$ be a compatible family of strict continuous real potentials
{\rm(}there always is such a~family{\rm)}. 

Then there exists a unique  {\rm(}Bauer{\rm)} diffusion $\mathfrak X$ on~$X$ 
such that, for every $V\in \V$, the kernel~$H_V$ is the exit kernel of $V$
and $\mathbbm E^x\tau_V=p_V(x)$, $x\in V$.
\end{theorem}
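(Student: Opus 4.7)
My strategy is to pass from the given analytic data $((H_V)_{V\in\V}, (p_V)_{V\in\V})$ to a~Hunt process on~$X$ via a~resolvent construction, and then upgrade this Hunt process to a~diffusion with the help of \cite{H-strict}. The parenthetical existence claim for a~compatible family of strict continuous real potentials I would treat separately: select a~strict continuous real potential on each $V\in\V$ (available by Proposition~\ref{equiv-potential}) and patch them using Riesz decomposition so that the differences on overlaps lie in the harmonic part.

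First, I would glue the $(p_V)$ into a~global potential kernel $G$ on~$X$ and form the associated sub-Markov resolvent $(V_\l)_{\l>0}$. Normality $H_V 1 = 1$, combined with $p_V \in \C_0(V)$ on a~base $\V$, is what makes this resolvent honest and causes $\l V_\l$ to approximate the identity as $\l \to \infty$. The axioms $(H_1)$--$(H_4)$ make $(X,\H)$ a~Bauer space, and the perturbation-theoretic construction developed in \cite{H-pert} then yields a~Hunt process $\mathfrak X$ whose excessive functions are exactly the positive superharmonic functions with respect to the sheaf~$\H$ and whose resolvent coincides with $(V_\l)$.

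Next, I would verify the matching of the data. For each $V\in\V$, the exit kernel of $\mathfrak X$ from $V$ is characterized as the unique boundary kernel reproducing bounded harmonic functions on~$V$, and $H_V$ has precisely this property by $(H_1)$--$(H_4)$; hence the two agree. For the mean exit time, both $p_V(x)$ and $\mathbbm E^x \tau_V$ are bounded continuous potentials on~$V$ satisfying the identity $q - H_W q = q_W$ for $W\in\V$ with $\ov W \subset V$ appearing in the proof of Proposition~\ref{diff-hake}; strictness of $p_V$ forces equality.

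The main obstacle is upgrading $\mathfrak X$ from a~Hunt process to a~diffusion, i.e.\ proving continuity of trajectories. This is where \cite{H-strict} enters through the local characterization of the fine support of continuous real potentials: each strict $p_V$ has fine support equal to~$\ov V$, so the fine topology agrees with the initial topology on every~$\mathcal P$-set. This rules out isolated fine-points and holding on the interior, while the boundary-kernel property of~$H_V$ (support in~$\partial V$) precludes jumps across the boundary of any $V\in\V$; together they force continuous paths. Uniqueness is then immediate: any Bauer diffusion with the prescribed exit kernels and mean exit times on a~base of the topology has the same resolvent, hence the same finite-dimensional distributions, hence the same law.
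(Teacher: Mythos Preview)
The paper does not actually supply a proof of this theorem; it simply attributes the result to \cite{H-pert} ``in combination with \cite{H-strict}, which provides a local characterization of the fine support of continuous real potentials.'' Your proposal follows exactly this route---invoking the same two references and sketching how they combine---so the approach is the same as the paper's.

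Since the paper gives no details, let me comment on your sketch itself. The overall plan (build a resolvent/semigroup from the compatible section of strict potentials via \cite{H-pert}, identify the exit kernels and mean exit times, deduce uniqueness from the resolvent) is sound. But your argument for path continuity misassigns roles. Strictness of $p_V$ yields that its fine support is all of $V$, yet this does \emph{not} force the fine topology to coincide with the initial topology (it is in general strictly finer), so the sentence ``the fine topology agrees with the initial topology on every $\mathcal P$-set'' is not justified. The actual mechanism for continuous paths is the one you mention only at the end: because $\V$ is a base and each $H_V$ is supported on $\partial V$, a jump from $x$ to a point $y\ne x$ would, for a suitable $V\in\V$ with $x\in V$ and $y\notin\ov V$, put mass of the exit distribution outside $\partial V$---a contradiction. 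That argument uses only the Bauer-space structure ($(H_1)$--$(H_4)$), not strictness. The genuine role of \cite{H-strict} is rather on the construction side: its local characterization of fine support is what lets one verify that the compatible family $(p_V)_{V\in\V}$ patches to a strict section in the sense of \cite{H-pert}, so that the semigroup (and hence the time scale giving $\mathbbm E^x\tau_V=p_V(x)$) is well defined and unique.
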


 \section{Darning of strongly stable compacts}

Let $(H_V)_{V\in \V}$ be a normal Bauer family of harmonic boundary kernels on~$X$ (see Definition~\ref{hake}).
 A compact $K$ in $X$ which is contained in a $\mathcal P$-set  is \emph{stable} if,   for every $z\in \partial K$,
 there exists an open neighborhood $W$
  of $z$ and a strictly positive function $s\in \mathcal S^+(W\setminus K)$ such that $\lim_{x\to z} s(x)=0$ ($T_K=0$
 $\mathbbm P^z$-a.s.\ for an associated diffusion).

\begin{definition} 
We say that a compact $K\ne \emptyset$ is \emph{strongly stable}, if it is contained in a $\mathcal P$-set $W$
and there exists a strictly positive function $h\in \H(W\setminus K)$ which tends to zero at $\partial K$. 
\end{definition}

Clearly,  every finite union of pairwise disjoint strongly stable compacts is strongly stable.
In classical potential theory of $\real^d$, a   compact $K\ne \emptyset$ is strongly stable if and only if it is stable
and has at most finitely many holes (that is, 
bounded connected components of~$\real^d\setminus K$).
For a more general statement see Proposition~\ref{components}.

Let us fix a strongly stable compact  $ K$ in $X$. 
We define a~new space $ X_0$
identifying~$K$ with a~point  $x_0\in K$, 
that is,
\begin{equation*} 
    X_0:=\{x_0\} \cup(X\setminus K) 
\end{equation*} 
such that the topology of $ X_0\setminus \{x_0\}$ is the topology of $X\setminus K$ 
and the neighborhoods of $x_0$ have the form 
\begin{equation*} 
                \{x_0\}\cup (N\setminus K),
\end{equation*} 
where   $N$ is a~neighborhood of $K$ in $X$. 

Our aim is the construction of a normal Bauer family of harmonic boundary kernels on~$X_0$ containing
the kernels $H_V$, $V\in \V(X\setminus K)$.
We shall then say that it is obtained by darning of $K$ (with $\{x_0\}$). 
Doing this for  a~union $K$ of pairwise disjoint stable compacts $K_1,\dots,K_m$
we shall also speak of \emph{darning and gluing of~$K_1,\dots,K_m$}.

To that end  we choose once and for all  a~regular $\mathcal P$-set  $W_0$ with  $K\subset W_0$ and 
\begin{equation}\label{def-g}
                      g:=H_{W_0\setminus K} 1_{\partial W_0}>0 \on W_0\setminus K.
\end{equation} 
Such a  set exists. Indeed, let $W$ be a $\mathcal P$-set containing $K$ and $h\in \H(W)$, $h>0$,
such that $h$ tends to zero at $\partial K$.
Let us choose an  open neighborhood $W'$ of $K$ which is relatively compact in $W$, 
and define $a:=\inf \{h(x)\colon x\in \partial W'\}$,
\begin{equation*}
                                  W_0:=K\cup \{x\in W'\setminus K\colon h(x)<a/2\}. 
\end{equation*} 
Then both $W_0$ and $W_0\setminus K$ are regular by the barrier criterion (for the points
in $\partial W_0$ consider the function $(a/2)-h$). Obviously, $H_{W_0\setminus K}1_{\partial W_0}=(a/2)\inv h$
on $W_0$. 

For  $0<r <1$, let 
\begin{equation*} 
               A_{r}:=\{x\in W_0\setminus K\colon g(x)<r\} \und                S_{r}:=\{x\in   W_0\setminus K\colon g(x)=r\}.
\end{equation*} 
The sets $A_{r}$ are regular, since, by the barrier criterion, also the points in $S_{r}$ are regular points of $A_{r}$, $0<r<1$.
The  family of sets   
$$
            U_r:=\{x_0\} \cup A_{r}, 
\qquad 0<r<1,
$$
is a~fundamental system of   open neighborhoods of $x_0$ such that $\partial U_r= S_{r}$.
Given  $0< t<1$ and a~probability measure $\sigma$ on~$A_t$, we define  
\begin{equation*} 
 \sigma H_{A_t}:= \int H_{A_t}(y,\cdot)\,d\sigma(y),
\end{equation*} 
and note that $ \sigma H_{A_t}(S_t)=r/t$ if $\sigma$ is supported by $S_r$ (see (\ref{sat})). 

Now let us specify the first of our main results (for its proof see Section \ref{main-proof}).

\begin{theorem}\label{main}
Let $\sigma_r$ be probability measures on~ $S_r$, $0<r<1$, and         
\begin{equation*}
          \V_0:=\{U_r\colon 0<r<1\}\cup \{V\in \V\colon \ov V\cap K=\emptyset\}. 
\end{equation*} 
\begin{itemize} 
\item[\rm (1)] 
The following two statements are equivalent.
\begin{itemize} 
\item[\rm (a)] 
There exist  boundary kernels $H_{U_r}$, $0<r<1$,  such that $(H_V)_{V\in \V_0}$ is a~normal Bauer 
family of harmonic boundary kernels on~$X$ {\rm(}with an associated sheaf~$\H_0$ of harmonic functions{\rm)} and 
\begin{equation}\label{mx0} 
                                 H_{U_t}(x_0,\cdot) =\sigma_t  \qquad \mbox{ for every }0<t<1.
\end{equation} 
\item[\rm (b)]
The probability measures $\sigma_r$ are \emph{compatible}, that is, for all $0<r<t<1$,
\begin{equation}\label{compatible}
\sigma_{r}H_{A_t }=  ( r/t)\sigma_{t}  \on S_t.
\end{equation} 
\end{itemize} 
\item[\rm (2)] Suppose that $(a)$ in $(1)$ holds. Then, for all $0<t<1$ and $B\in \B$,
\begin{equation}\label{HUr}
H_{U_t}(y,B)=H_{A_t}(y, B\cap S_t)+ (1- (g(y)/t))\sigma_t(B), \quad y\in A_t.
\end{equation} 
The singleton $\{x_0\}$ is strongly stable   and  $\{x_0\}\cup (W_0\setminus K)$ is a $\mathcal P$-set for $(X,\H_0)$.

Further,  for all open neighborhoods $W$ of $x_0$ and 
 $h\in \C(W)$ which are \hbox{{\rm ($\H$-)}} harmonic on~$W\setminus \{x_0\}$, the following properties are equivalent:
\begin{itemize}
\item[\rm (a)] The function $h$ is {\rm ($\H_0$-)}harmonic on~$W$.
\item[\rm (b)] For every $0<r<1$ with $\ov U_r\subset W$,  $\int h\,d\sigma_r=h(x_0)$. 
\item[\rm (c)] There exists $0<r<1$ with $\ov U_r\subset W$ and  $\int h\,d\sigma_r=h(x_0)$. 
\end{itemize} 
\end{itemize} 
\end{theorem}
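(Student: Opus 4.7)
My plan for $(b)\Rightarrow(a)$ in part (1) is to \emph{define} the kernel $H_{U_r}$ by formula (\ref{HUr}) for $y\in A_r$, by $H_{U_r}(x_0,\cdot):=\sigma_r$, and by $H_{U_r}(y,\cdot):=\ve_y$ for $y\in U_r^c$, and then to verify the axioms $(H_1)$--$(H_4)$ for the enlarged family $(H_V)_{V\in\V_0}$. Regularity of $A_r$ forces $g/r=H_{A_r}1_{S_r}$ on $A_r$ (both are bounded $\H$-harmonic on $A_r$ with boundary values $1$ on $S_r$ and $0$ on $\partial K$), so $H_{U_r}(y,\cdot)$ is a probability measure on $X_0$ with support in $S_r$. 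For $(a)\Rightarrow(b)$ together with the formula (\ref{HUr}) in part (2), I would argue as follows: under (a), for any $f\in\C(X_0)$ the restriction of $H_{U_t}f$ to $A_t$ is $\H$-harmonic on $A_t$; its continuous boundary values are $f$ on $S_t$ and, by continuity of $H_{U_t}f$ at $x_0$, the constant $\int f\,d\sigma_t$ on $\partial K$. Regularity of $A_t$ and uniqueness of harmonic extensions yield $H_{U_t}f=H_{A_t}(f\mathbbm{1}_{S_t})+(1-g/t)\int f\,d\sigma_t$ on $A_t$, which is precisely (\ref{HUr}). Inserting this into the $(H_1)$-identity $\int H_{U_t}(y,B)\,d\sigma_r(y)=H_{U_t}(x_0,B)=\sigma_t(B)$ and using $g(y)=r$ on the support of $\sigma_r$ gives $(\sigma_rH_{A_t})(B\cap S_t)+(1-r/t)\sigma_t(B)=\sigma_t(B)$, i.e., compatibility (\ref{compatible}).

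\textbf{Axioms $(H_1), (H_2), (H_4)$ for $(b)\Rightarrow(a)$.} For $(H_1)$ there are two cases. If $W\in\V$ with $\ov W\subset A_r$, then $H_WH_{A_r}=H_{A_r}$ and the harmonicity of $g$ on $A_r$ show that $H_W$ preserves each term of (\ref{HUr}), giving $H_WH_{U_r}=H_{U_r}$. If $W=U_s$ with $s<r$, applying $H_{U_s}$ against (\ref{HUr}) at $y\in A_s$ produces four terms: the $H_{A_s}$-portion, composed with $H_{A_r}(\cdot,\cdot\cap S_r)$, collapses to $H_{A_r}(y,\cdot\cap S_r)$ via $H_{A_s}H_{A_r}=H_{A_r}$ (noting $H_{A_r}(z,\cdot)=\ve_z$ vanishes on $S_r$ for $z\in\partial K$); the $\sigma_s$-portion integrated against (\ref{HUr}) recovers $\sigma_r(B)$ by compatibility (\ref{compatible}); and the identity $(g(y)/s)(1-s/r)+(1-g(y)/s)=1-g(y)/r$ reassembles the terms to $H_{U_r}(y,B)$. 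For $(H_4)$ (and hence $(H_2)$), continuity of $H_{U_r}f$ on $A_r$ is inherited from continuity of $y\mapsto H_{A_r}(f\mathbbm{1}_{S_r})(y)$ and $g$; at $z\in S_r$, regularity of $A_r$ gives the correct boundary value $f(z)$; the key continuity at $x_0$ uses $g(y)\to 0$ as $y\to\partial K$ (by regularity of $W_0\setminus K$) together with the estimate $|H_{A_r}(f\mathbbm{1}_{S_r})(y)|\le\|f\|_\infty\,g(y)/r\to 0$, so $H_{U_r}f(y)\to\int f\,d\sigma_r=H_{U_r}f(x_0)$.

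\textbf{$(H_3)$ and part (2).} The function $v:=1-g/r$ (extended by $v(x_0):=1$) lies in ${}^+\H^\ast(U_r)$ (continuous, $\H$-harmonic on $A_r$, and $\int v\,d\sigma_s=1-s/r\le 1=v(x_0)$), and separates $x_0$ from every $y\in A_r$. For separation of distinct $y_1,y_2\in A_r$ sharing the same $g$-value, I would use a continuous hyperharmonic function $p$ on $A_r$ separating them and satisfying the estimate $\int p\,d\sigma_s\le C s$ (obtained by balayage onto a compact subset of $A_r\cap\{g\ge s_0\}$, using that $A_r$ is a $\mathcal P$-set in $(X,\H)$); then $u:=p+\gamma v$ with $\gamma:=Cr$ lies in ${}^+\H^\ast(U_r)$ and separates $y_1,y_2$. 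For part (2): $\{x_0\}$ is strongly stable with witness $g\in\H_0(W_0\setminus K)$, which is strictly positive and tends to $0$ at $x_0$; $\{x_0\}\cup(W_0\setminus K)$ is a $\mathcal P$-set in $(X_0,\H_0)$ by combining $c-g$ (extended by $c$ at $x_0$, $c>1$) with $p+\gamma v$-type functions. For the harmonic characterization, $(a)\Rightarrow(b)$ follows from evaluating $H_{U_r}h=h$ at $x_0$; $(b)\Rightarrow(c)$ is trivial; for $(c)\Rightarrow(a)$, extend $h$ continuously to $\partial K$ by $h(x_0)$, so that $\H$-harmonicity of $h$ on $A_s$ yields $h(y)=H_{A_s}(h\mathbbm{1}_{S_s})(y)+(1-g(y)/s)h(x_0)$, which matches $H_{U_s}h(y)$ precisely when $\int h\,d\sigma_s=h(x_0)$; compatibility (\ref{compatible}) then propagates the hypothesis from $r$ to every admissible $s$ via $\int h\,d\sigma_s=\int H_{A_r}h\,d\sigma_s=(s/r)h(x_0)+(1-s/r)h(x_0)=h(x_0)$, with a symmetric computation for $s>r$. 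The principal technical obstacle is the construction of the hyperharmonic function $p$ with the required control $\int p\,d\sigma_s\le Cs$ used in $(H_3)$.
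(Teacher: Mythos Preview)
Your overall strategy is correct and matches the paper's: define $H_{U_t}$ by (\ref{HUr}), verify the axioms, and for $(a)\Rightarrow(b)$ read off compatibility from $(H_1)$ applied at $x_0$. The paper's treatment of $(H_1)$, $(H_2)$, $(H_4)$ is essentially the same as yours; it streamlines the $(H_1)$ check by first recording the observation that any $h\in\C(X_0)$ which is $\H$-harmonic on $A_t$ satisfies $h(y)=H_{A_t}(1_{S_t}h)(y)+(1-g(y)/t)h(x_0)$ on $A_t$, so once $\int h\,d\sigma_r=h(x_0)$ is known for $h:=H_{U_t}f$, the identity $H_{U_r}h=h$ follows immediately and only the value at $x_0$ needs explicit computation.

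The one place where the paper does something genuinely simpler is $(H_3)$, which you rightly flag as your ``principal technical obstacle''. The paper avoids any balayage estimate. Working on $U_1:=\{x_0\}\cup(W_0\setminus K)$, set $w_0(x_0):=1$ and $w_0:=1-g$ on $W_0\setminus K$; this lies in ${}^+\H_{\W}^\ast(U_1)$ and separates points on different level sets of $g$. For $x_1,x_2\in S_r$, pick any $v\in{}^+\H^\ast(W_0)$ with $v(x_1)\ne v(x_2)$ (possible since $W_0$ is a $\mathcal P$-set for $(X,\H)$), normalize so that $v\le r$, and define
\[
w_1(x_0):=1,\qquad w_1:=(w_0+v)\wedge 1 \quad\text{on }W_0\setminus K.
\]
Then $w_1\in{}^+\H_{\W}^\ast(U_1)$ (the minimum of hyperharmonic functions is hyperharmonic, and at $x_0$ one trivially has $\int w_1\,d\sigma_s\le 1=w_1(x_0)$). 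On $S_r$ we have $w_0+v=(1-r)+v\le 1$, so $w_1=(1-r)+v$ there and separates $x_1,x_2$. No control of $\int p\,d\sigma_s$ is required; the truncation by $1$ does all the work.

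For $(c)\Rightarrow(a)$ in part (2), your propagation from one $r$ to all admissible $s$ via compatibility is correct but unnecessary. The paper argues directly: given one $t$ with $\ov{U_t}\subset W$ and $\int h\,d\sigma_t=h(x_0)$, both $h$ and $H_{U_t}h$ are continuous on $\ov{U_t}$, $\H$-harmonic on $A_t$, agree on $S_t$, and equal $h(x_0)$ at $x_0$; uniqueness on the regular set $A_t$ gives $H_{U_t}h=h$ on $U_t$, whence $h|_{U_t}\in\H_0(U_t)$. Combined with $h|_{W\setminus\{x_0\}}\in\H_0(W\setminus\{x_0\})$, the sheaf property yields $h\in\H_0(W)$.
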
 

\begin{remark} \label{simple-sigma}
{\rm
 For the moment, let us consider Example \ref{simple-ex} and assume that 
 the compacts  $K_1,\dots, K_m$ are closed balls having centers $x_1,\dots,x_m$, respectively. 
Then it is easy to get compatible probability measures $\sigma_r$ provided $W_0$ has been chosen 
to consist of open balls $B_j$ in $X_j$ which are centered at $x_j$, $1\le j\le m$. Indeed, then each $S_r$, $0<r<1$,
is a~union of spheres $S_{j,r}$ contained in the shells $B_j\setminus K_j$, $1\le j\le m$. 
Denoting the normed surface measures on $S_{j,r}$ by $\sigma_{j,r}$, 
  it suffices to 
fix $\a_1,\dots,\a_m\in [0,1]$ with  $\a_1+\dots+\a_m=1$ and to take
\begin{equation*}
                              \sigma_r:=\a_1 \sigma_{1,r}+ \dots \a_m \sigma_{m,r},\qquad 0<r<1.
\end{equation*} 
}
 % Thus it suffices to 
% fix $\a_1,\dots,\a_m\in [0,1]$ such that $\a_1+\dots+\a_m=1$ and to take
% \begin{equation*}
%                               \sigma_r:=\a_1 \sigma_{1,r}+ \dots \a_m \sigma_{m,r},\qquad 0<r<1,
% \end{equation*} 
% where $\sigma_{j,r}$ denotes }
\end{remark}

In the general case, the existence of compatible probability measures $\sigma_r$ on~$S_r$, $0<r<1$, 
follows from $(H_1)$ and the weak compactness of probability measures on a compact set.

\begin{proposition}\label{ex-comp}
Let   $(\eta_n)$ be a sequence in $(0,1)$ which  is strictly decreasing to zero.
For every $n\in\nat$, let $\nu_n$ be a probability measure on~$S_{\eta_n}$. 
Then there exists a~subsequence of $(\nu_{n_k})$ of $(\nu_n)$ such that, for every $0<r<1$,
the probability measures~$\sigma_{r,k}$ on~$S_r$, defined by
\begin{equation*}
                         \sigma_{r,k}:= (r/\eta_{n_k}) (\nu_{n_k} H_{A_r})|_{S_r}, \qquad\mbox{ if }  \eta_{n_k}<r, 
\end{equation*} 
converge weakly to a   probability measure $\sigma_r$ as $k\to\infty$. 
The measures $\sigma_r$, $0<r<1$, are compatible.
\end{proposition}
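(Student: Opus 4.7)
The plan is first to observe that all $\sigma_{r,k}$ are probability measures on $S_r$ contained in a fixed compact set, apply Prokhorov together with a diagonal procedure to extract weak convergence on a countable dense set of levels~$r$, and then propagate the convergence to all $r\in(0,1)$ via a compatibility relation already satisfied by the pre-limit measures. As preparation, $g$ extends continuously to~$\ov{W_0}$ by the value~$0$ on~$\partial K$ and is harmonic on~$A_r$, so solving the Dirichlet problem on $A_r$ with boundary values $0$ on $\partial K$ and $r$ on~$S_r$ yields $H_{A_r}(y,S_r)=g(y)/r$ for $y\in A_r$. Since $\nu_{n_k}$ is carried by $S_{\eta_{n_k}}$ where $g=\eta_{n_k}$, the factor $r/\eta_{n_k}$ in the definition of~$\sigma_{r,k}$ precisely compensates the mass $\eta_{n_k}/r$, so each $\sigma_{r,k}$ is a probability measure on~$S_r$. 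All these measures are supported in the compact set~$\ov{W_0}$. Fix a countable dense subset~$D$ of~$(0,1)$; by Prokhorov and a diagonal argument, extract a subsequence $(\nu_{n_k})$, not relabeled, so that $\sigma_{r,k}$ converges weakly to a probability measure~$\sigma_r$ on~$S_r$ for every $r\in D$.

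The crucial step is to show that $(\sigma_{r,k}H_{A_t})|_{S_t}=(r/t)\sigma_{t,k}$ whenever $\eta_{n_k}<r<t<1$. To see this, apply $(H_1)$ in the form $H_{A_r}H_{A_t}=H_{A_t}$, giving $\nu_{n_k}H_{A_r}H_{A_t}=\nu_{n_k}H_{A_t}$. The measure $\mu:=\nu_{n_k}H_{A_r}$ is carried by $S_r\cup\partial K$, and on $\partial K\subset A_t^c$ the kernel $H_{A_t}$ acts as the identity; therefore $(\mu|_{\partial K})H_{A_t}=\mu|_{\partial K}$ stays on~$\partial K$ and disappears upon restricting to~$S_t$. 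Hence $(\mu|_{S_r}H_{A_t})|_{S_t}=(\nu_{n_k}H_{A_t})|_{S_t}$, which after multiplication by $r/\eta_{n_k}$ yields the claimed identity.

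To extend weak convergence to an arbitrary $r\in(0,1)$, pick $s\in D$ with $s<r$; for $k$ so large that $\eta_{n_k}<s$, the identity above reads $\sigma_{r,k}=(r/s)(\sigma_{s,k}H_{A_r})|_{S_r}$. Given $f\in\C(S_r)$, Tietze's theorem provides an extension $\tilde f\in\C(\ov{W_0})$ vanishing in a neighborhood of~$\partial K$; since $A_r$ is regular, $H_{A_r}\tilde f$ is continuous on~$\ov{A_r}$, in particular on~$S_s$, and weak convergence $\sigma_{s,k}\to\sigma_s$ on~$S_s$ yields $\int H_{A_r}\tilde f\,d\sigma_{s,k}\to\int H_{A_r}\tilde f\,d\sigma_s$. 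The vanishing of~$\tilde f$ near~$\partial K$ kills any boundary contribution there, so $(\sigma_{s,k}H_{A_r})|_{S_r}\to(\sigma_sH_{A_r})|_{S_r}$ weakly on~$S_r$, and hence $\sigma_{r,k}\to\sigma_r:=(r/s)(\sigma_sH_{A_r})|_{S_r}$ weakly; independence of $s$ is automatic since $\sigma_{r,k}$ itself does not depend on~$s$. Compatibility of the family $(\sigma_r)_{0<r<1}$ follows by passing to the limit $k\to\infty$ in $(\sigma_{r,k}H_{A_t})|_{S_t}=(r/t)\sigma_{t,k}$ via the same Tietze trick on~$S_r$.

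The main obstacle is the bookkeeping around the boundary~$\partial K$: the auxiliary measures $\sigma_{s,k}H_{A_r}$ may carry mass that escapes onto~$\partial K$, and one must verify that this escaping mass does not contaminate the weak limit on~$S_r$. Choosing Tietze extensions that vanish in a neighborhood of~$\partial K$ resolves this cleanly; a related minor point is ensuring each~$A_r$ is regular so that $H_{A_r}\tilde f\in\C(\ov{A_r})$, which follows from the barrier criterion used when $W_0$ was constructed.
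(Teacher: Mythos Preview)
Your argument is correct and follows essentially the same route as the paper: establish the pre-limit compatibility $(\sigma_{r,k}H_{A_t})|_{S_t}=(r/t)\,\sigma_{t,k}$ (the paper isolates this as Lemma~\ref{partial-compatible}), run a diagonal argument on a countable set of levels, and push the convergence to all~$r$ via that relation. The only cosmetic differences are that the paper uses the given sequence $(\eta_m)$ itself as the countable set rather than an arbitrary dense $D\subset(0,1)$, and it leaves the weak-continuity step ``letting $k\to\infty$'' implicit where you spell it out with a Tietze extension vanishing near~$\partial K$; one could equally invoke $(H_4)$ to see that $H_{A_r}(1_{S_r}f)$ is already continuous on~$A_r$ without the extension trick.
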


So, by Theorem \ref{main}, a darning of $K$ is always possible (we could take a Dirac measure on each $S_{\eta_n}$).
However, looking at examples, where $K$ is a disjoint union of compacts or contains holes,
it is clear that some controlled choice of the measures~$\nu_n$
is necessary to obtain what we really want  (see  the examples at the beginning of Section \ref{appendix}). 
Otherwise, it may happen that, for every $0<r<t<1$,
the exit measure $H_{U_r}(x_0,\cdot)=\sigma_r$ charges only one of possibly many connected components
of $A_t$. However,  in an elliptic situation,
we would ask for a~darning such that the outcome is elliptic as well.
And we may even desire some numerical control of the distribution of the 
measures $\sigma_r=H_{U_r}(x_0,\cdot)$ on various parts of~$S_r$. 

To achieve this, let us first observe that, for every $0<t<1$, 
every connected component $V$ of $A_t$ has boundary points both in $K$
and in $S_t$. This implies the following.

\begin{lemma}\label{finite}
Each set $A_t$, $0<t<1$, has only  finitely many connected components. 
\end{lemma}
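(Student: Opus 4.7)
The plan is to argue by contradiction, using the preceding observation together with the intermediate value theorem and compactness of $\ov{W_0}$.

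Suppose $A_t$ has infinitely many connected components $V_1,V_2,\dots$. The strategy is to use the continuity and boundary behavior of the function $g$ to extract a common limit point of representative points chosen one per component, and then derive a contradiction from the openness of a single component.

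First I would exploit the preceding observation together with the connectedness of each $V_n$. Because $\partial V_n \cap K \ne \emptyset$ and $g \to 0$ on $\partial K$, the function $g$ takes values on $V_n$ arbitrarily close to $0$; because $\partial V_n \cap S_t \ne \emptyset$ and $g \equiv t$ on $S_t$, it also takes values arbitrarily close to $t$. Since $V_n$ is connected and $g$ is continuous, the intermediate value theorem yields a point $x_n \in V_n$ with $g(x_n) = t/2$.

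Next I would use compactness: the points $x_n$ all lie in $\ov{W_0}$, which is compact, so after passing to a subsequence we may assume $x_{n_k} \to x^\ast \in \ov{W_0}$. Continuity of $g$ on $\ov{W_0}$ (with the extension $g=0$ on $K$ and $g=1$ on $\partial W_0$) gives $g(x^\ast) = t/2$. Since $0 < t/2 < 1$, the point $x^\ast$ lies neither in $K$ nor in $\partial W_0$, so $x^\ast \in W_0 \setminus K$; and since $g(x^\ast) < t$, actually $x^\ast \in A_t$. Hence $x^\ast$ belongs to some component $V_{n^\ast}$.

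The contradiction then drops out from local connectedness of $X$ (which makes each $V_{n^\ast}$ open): for all sufficiently large $k$, $x_{n_k} \in V_{n^\ast}$, but by construction $x_{n_k} \in V_{n_k}$, and for $n_k \ne n^\ast$ the components $V_{n_k}$ and $V_{n^\ast}$ are disjoint. I do not expect any significant obstacle here; the only subtlety to verify carefully is that $g$ really does take values arbitrarily close to both $0$ and $t$ inside each $V_n$, which is precisely what the preceding observation and continuity of the extended $g$ on $\ov{W_0}$ provide.
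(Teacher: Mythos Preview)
Your argument is correct and follows essentially the same route as the paper's proof: both assume infinitely many components, pick points $x_n\in V_n$ with $g(x_n)=t/2$, pass to a convergent subsequence whose limit lies in some component, and derive a contradiction from openness of that component. You have simply spelled out in more detail why such $x_n$ exist (via the preceding observation and the intermediate value theorem) and why the limit lands in $A_t$.
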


\begin{proof} Assume on the contrary that we have $0<t<1$ such that $A_t$
has countably many connected components $V_1,V_2,V_3,\dots$. We may choose
points $x_n\in V_n$ such that $g(x_n)=t/2$. Let $(x_{n_k})$ be convergent subsequence,
$x:=\lim_{k\to\infty} x_{n_k}$. Then $g(x)=t/2$, and hence $x\in V_n$ for some $n\in\nat$.
So there exists $k_0\in\nat$ such that, for every $k>k_0$,  $x_{n_k}\in V_n$,
and hence $V_{n_k}=V_n$. A contradiction.
\end{proof} 

Let $\cc_0$ be the family of connected components of $W_0\setminus K$
and, for~$n\in\nat$, let~$\cc_n$ denote the family of connected components of $A_{\eta_n}$.
Obviously,\begin{equation*}
                                \bigcup_{V'\in \cc_{n}, V'\subset V} V' =V\cap A_{\eta_n} \qquad
\mbox{ for all $n\in\nat$ and $V\in \cc_{n-1}$}.
\end{equation*} 
We successively choose $\a_V\in [0,1]$ (if we wish, strictly positive) such that
\begin{equation}\label{choice-a}
     \sum_{V\in \cc_0} \a_V=1 \und     \sum_{V'\in \cc_n, V'\subset V} \a_{V'} =\a_V \ \mbox{  for } n\in\nat, \, V\in \cc_{n-1}.
\end{equation} 
Finally, we fix   probability measures $\nu_n$ on~$S_{\eta_n}$, $n\in\nat$, such that 
\begin{equation}\label{choice-nun}
          \nu_n(V\cap S_{\eta_n})=\a_V\qquad \mbox{ for every }V\in \cc_{n-1}
\end{equation} 
(which is possible, since the sets $V\cap S_{\eta_n}$ are not empty).

Before writing down the consequences of these choices, let us recall
that $(H_V)_{V\in \V}$ is  \emph{elliptic} if one of the following 
equivalent statements holds.
\begin{itemize} 
\item Every harmonic function $h\ge 0$ on a connected open set  $U$
 is identically zero or   strictly positive on~$U$.
\item Every hyperharmonic function $h\ge 0$ on a connected open set  $U$
 is identically zero or   strictly positive on~$U$.
\item For all connected regular sets~$U$
  and  $x\in U$,   the support of $H_U(x,\cdot)$ is $\partial U$.
\end{itemize}

\begin{theorem}\label{fine-choice}
The preceding choices yield compatible probability measures $\sigma_r$, $0<r<1$,
such that {\rm(}taking $\eta_0:=1${\rm)}
\begin{equation*}
                                 \sigma_r(V)=\a_V\qquad \mbox{ for all  $ V \in \cc_n$, $n\ge 0$, with $r<\eta_n$.}
\end{equation*}
In particular, $(H_V)_{V\in \V_0}$ is elliptic on~$X_0$, if $(H_V)_{V\in \V}$ is elliptic and all $\a_V>0$.
\end{theorem}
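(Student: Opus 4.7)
The plan is to apply Proposition \ref{ex-comp} to the measures $\nu_n$ to produce, after passing to a subsequence, compatible probability measures $(\sigma_r)_{0<r<1}$ arising as weak limits of the $\sigma_{r,k}$, and then to verify the identity $\sigma_r(V\cap S_r)=\a_V$ for $V\in\cc_n$ and $r<\eta_n$ by computing $\sigma_{r,k}(V\cap S_r)$ explicitly and passing to the limit.

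First I would establish the auxiliary combinatorial identity $\nu_n(V\cap S_{\eta_n})=\a_V$ for \emph{every} $V\in\cc_m$ with $m<n$: the pieces $W\cap S_{\eta_n}$, $W\in\cc_{n-1}$, partition $S_{\eta_n}$ since $S_{\eta_n}\subset A_{\eta_{n-1}}$; (\ref{choice-nun}) gives $\nu_n(W\cap S_{\eta_n})=\a_W$; and iterating (\ref{choice-a}) yields $\sum_{W\in\cc_{n-1},W\subset V}\a_W=\a_V$. Next, since $g/r$ is harmonic on $A_r$, equals $1$ on $S_r$ and vanishes at $\partial K$, we have $H_{A_r}(y,S_r)=g(y)/r$. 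For $y\in S_{\eta_{n_k}}\cap V$ with $V\in\cc_n$ and $\eta_{n_k}<r<\eta_n$, the component of $A_r$ containing $y$ lies inside $V$, so $H_{A_r}(y,V\cap S_r)=g(y)/r=\eta_{n_k}/r$; for $y\in S_{\eta_{n_k}}\setminus V$ it vanishes. Hence
\begin{equation*}
\sigma_{r,k}(V\cap S_r)=\frac{r}{\eta_{n_k}}\cdot\frac{\eta_{n_k}}{r}\cdot\nu_{n_k}(V\cap S_{\eta_{n_k}})=\a_V.
\end{equation*}
By Lemma \ref{finite} the finitely many pieces $V'\cap S_r$, $V'\in\cc_n$, partition $S_r$, so each is clopen in $S_r$; weak convergence then yields $\sigma_r(V\cap S_r)=\a_V$.

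For the ellipticity claim, assume all $\a_V>0$. The crucial step is $\supp(\sigma_r)=S_r$. Given $z\in S_r$ and an open neighborhood $O$ of $z$ in $S_r$ (chosen small enough to avoid $\partial K$), pick a component $Z_z$ of $A_r$ with $z\in\partial Z_z$, which exists by Lemma \ref{finite}. Since $Z_z$ accumulates at $\partial K$, some $V^*\in\cc_m$ lies inside $Z_z$ for $m$ large; choose $0<r'<\min(r,\eta_m)$, so that $\sigma_{r'}(V^*\cap S_{r'})=\a_{V^*}>0$ by the first part. For $y\in V^*\subset Z_z$, ellipticity of $(H_V)_{V\in\V}$ combined with (\ref{comp-reg}) gives $H_{A_r}(y,\cdot)=H_{Z_z}(y,\cdot)$ with support $\partial Z_z\ni z$, whence $H_{A_r}(y,O)>0$. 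The compatibility relation $\sigma_{r'}H_{A_r}=(r'/r)\sigma_r$ on $S_r$ then forces $\sigma_r(O)>0$.

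With $\supp(\sigma_r)=S_r$ in hand, ellipticity of $(H_V)_{V\in\V_0}$ follows via the harmonic criterion. Given $h\ge 0$ harmonic on a connected open $U\subset X_0$ with $h(z)=0$ for some $z\in U$: if $z\ne x_0$, old ellipticity kills $h$ on the component $W$ of $U\setminus\{x_0\}$ containing $z$, and continuity at $x_0$ along a sequence in $W$ tending to $x_0$ (which exists since $U$ is connected) forces $h(x_0)=0$; once $h(x_0)=0$, the mean-value property $\int h\,d\sigma_r=h(x_0)=0$ for small $r$ with $\ov U_r\subset U$ combined with $\supp(\sigma_r)=S_r$ yields $h\equiv 0$ on $S_r$, and since every component of $U\setminus\{x_0\}$ meets $S_r$ for small $r$, old ellipticity propagates $h\equiv 0$ throughout $U$. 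The main obstacle in this plan is the support identity $\supp(\sigma_r)=S_r$: a direct approach asking the pieces $V\cap S_r$ ($V\in\cc_n$) to form a base for the topology of $S_r$ is not transparent, which is precisely why the argument must detour through the compatibility relation and the ellipticity of $H_{A_r}$.
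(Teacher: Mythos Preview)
Your argument for the identity $\sigma_r(V)=\a_V$ is essentially the paper's own proof: the paper also establishes $\nu_{n_k}(V)=\a_V$ by induction from (\ref{choice-a}) and (\ref{choice-nun}), invokes (\ref{comp-reg}) to split $A_r$ into the open pieces $A_r\cap V$ and $A_r\setminus V$ (which is exactly your component argument), obtains $\sigma_{r,k}(V\cap S_r)=\a_V$, and passes to the weak limit using that $S_r\cap V$ and $S_r\setminus V$ are both closed.

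Where you diverge from the paper is the ellipticity claim. The paper gives \emph{no} argument for it at all; the proof stops after the limit passage, treating the ``In particular'' as self-evident. You, by contrast, supply a genuine proof, and correctly identify that the heart of the matter is $\supp(\sigma_r)=S_r$. Your observation that the clopen pieces $V\cap S_r$, $V\in\cc_n$, are in general far too coarse to detect support (think of $K$ a ball, where every $\cc_n$ is a singleton) is exactly right, and your detour through the compatibility relation (\ref{compatible}) together with ellipticity of $H_{A_r}$ on the connected regular component $Z_z$ is the appropriate remedy. One point you pass over quickly is that every component $W_i$ of $U\setminus\{x_0\}$ meets $S_r$ for small $r$: since $x_0\in\ov{W_i}$ (otherwise $W_i$ would be clopen in the connected set $U$), the continuous function $g$ takes arbitrarily small values on $W_i$; if $W_i\subset A_r$ then $W_i$ would coincide with a component $Z$ of $A_r$, but $Z$ has boundary points in $S_r\subset U\setminus\{x_0\}$, contradicting that $W_i$ is closed in $U\setminus\{x_0\}$. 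With this filled in, your ellipticity argument is complete and adds substance that the paper omits.
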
 

\begin{proof} Let $r,n,V$ be as indicated and let $k\in\nat$ with $\eta_{n_k}<r$. 
 By induction, (\ref{choice-a}) and (\ref{choice-nun}) imply that $\nu_{n_k}(V)=\a_V$. 
By  (\ref{comp-reg}),
\begin{equation*} 
\sigma_{r,k}(V\cap S_r)=(r/\eta_k)(\nu_{n_k}|_V) H_{A_r\cap V}(S_r)=\a_V,
\end{equation*} 
since both $A_r\cap V $ and $A_r\setminus V$ are open sets. The proof is completed letting 
$k\to \infty$ (the sets $S_r\cap V$ and $S_r\setminus V$ are closed).
\end{proof} 
 
Finally, we see that starting with a Bauer diffusion $\mathfrak X$ on~$X$ and an arbitrary neighborhood 
$N$ of the strongly stable compact~$K$, we may construct a darning of~$K$ that yields  a Bauer
diffusion~$\mathfrak X_0$ on~$X_0$ which, outside of $N$, is equivalent to~$\mathfrak X$.

\begin{corollary}\label{diff-diff}
Suppose that $\mathfrak X$ is a Bauer diffusion on~$X$, $\V$ is a base for the topology of $X$,
 and $(H_V)_{V\in \V}$ is a family of exit kernels satisfying $(D_1)$ and $(D_2)$. As before, let $(H_V)_{V\in\V_0}$
be a Bauer family of harmonic boundary kernels on~$X_0$ obtained by   darning of the strongly
stable compact $K$.

Then, for every $0<r<1$,
 there exists a Bauer diffusion $\mathfrak X_0$ on~$X_0$ such that, for every $V\in \V_0$, $H_V$ is the exit kernel
for $V$ with respect to $\mathfrak X_0$ and  the restrictions of $\mathfrak X$ and $\mathfrak X_0$ on~$X\setminus (K\cup \ov A_r)$ 
have the same finite-dimensional  distributions.
\end{corollary}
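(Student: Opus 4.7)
The plan is to apply Theorem~\ref{kernel-to-diff} to the normal Bauer family $(H_V)_{V\in \V_0}$ on~$X_0$ supplied by Theorem~\ref{main}. This immediately produces a Bauer diffusion~$\mathfrak X_0$ with the prescribed exit kernels (the first half of the corollary), but in order to make its restriction to $G:=X\setminus(K\cup\overline{A_r})$ match~$\mathfrak X$, I need to choose the compatible family of strict continuous real potentials on~$X_0$ so that it extends $(p_V)_{V\in\V}$ from~$\mathfrak X$.

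First I would set $p_V^0:=p_V$ for every $V\in \V_0$ with $\overline V\cap K=\emptyset$; such a~$p_V$ remains a strict continuous real potential for $(X_0,\H_0)$, since the sheaves $\H$ and $\H_0$ agree on $X\setminus K$. For each $U_s$ I would use the ansatz
\begin{equation*}
p_{U_s}^0(y)=p_{A_s}(y)+(1-g(y)/s)\,c_s\quad(y\in A_s),\qquad p_{U_s}^0(x_0)=c_s,
\end{equation*}
where $p_{A_s}(y)=\mathbbm E^y\tau_{A_s}$ (well defined and a strict continuous real potential on the regular set $A_s\subset X$) and $c_s>0$ is to be determined. Compatibility with $p_V$ on~$A_s\cap V$ is automatic, since the difference $(p_{A_s}-p_V)+(1-g/s)c_s$ is harmonic there (using $g\in \H(W_0\setminus K)$ and compatibility of~$(p_V)$). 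Mutual compatibility among the $p_{U_s}^0$ reduces, via~(\ref{compatible}) and the mean-value characterization of~$\H_0$-harmonicity at~$x_0$ in Theorem~\ref{main}(2), to a linear constraint on the constants $c_s$ that is solvable by taking $c_s$ proportional to~$s$. Continuity at~$x_0$ is automatic from $g(y)\to 0$ and $p_{A_s}(y)\to 0$ as $y\to\partial K$ (stability of~$K$), and strictness then follows from Proposition~\ref{equiv-potential}.

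Once this compatible family is in hand, Theorem~\ref{kernel-to-diff} yields a Bauer diffusion~$\mathfrak X_0$ on~$X_0$ with $\mathbbm E_0^x\tau_V=p_V^0(x)$; in particular, for every $V\in\V$ with $\overline V\subset G$ both the exit kernel~$H_V$ and the mean exit time $p_V(x)$ under~$\mathfrak X_0$ coincide with those under~$\mathfrak X$. The uniqueness part of Theorem~\ref{kernel-to-diff} applied to the restricted Bauer structure on any such~$V$ then shows that the killed diffusions on~$V$ have identical law, and the strong Markov property propagates this equality through an exhaustion of~$G$ by such $V$'s, giving the asserted equality of finite-dimensional distributions. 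The main obstacle is the construction in the second paragraph — producing the $(p_{U_s}^0)$ with the correct strict-potential and compatibility properties at the glued point~$x_0$ — which rests crucially on~(\ref{compatible}) and on the fine-support characterization of continuous real potentials from~\cite{H-strict} that underlies Theorem~\ref{kernel-to-diff}.
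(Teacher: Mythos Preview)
Your overall strategy---apply Theorem~\ref{kernel-to-diff} with a suitably chosen compatible family of strict continuous real potentials---is exactly what the paper does. But your construction of that family has two genuine gaps.

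First, the claim that taking $c_s$ proportional to~$s$ yields mutual compatibility of the~$p_{U_s}^0$ is incorrect. Writing out the $\H_0$-harmonicity condition at~$x_0$ (Theorem~\ref{main}(2)) for the difference $p_{U_s}^0-p_{U_t}^0$ on~$U_t$, and using $(p_{A_s}-p_{A_t})|_{A_t}=H_{A_t}(1_{S_t}p_{A_s})$ together with~(\ref{compatible}), one obtains the constraint
\[
\frac{c_s}{s}-\frac{c_t}{t}=\frac1t\int p_{A_s}\,d\sigma_t\qquad(0<t<s<1).
\]
Since $\sigma_t$ is a probability measure on $S_t\subset A_s$ and $p_{A_s}>0$ there, the right-hand side is strictly positive; hence $c_s/s$ cannot be constant. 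A consistent system for the~$c_s$ does exist, but it is not linear in~$s$, and you would still have to show $c_t>0$ as $t\downarrow 0$, which amounts to bounding $t^{-1}\!\int p_{A_s}\,d\sigma_t$---not at all obvious.

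Second, your appeal to Proposition~\ref{equiv-potential} for strictness is a non sequitur: that proposition guarantees the \emph{existence} of some strict continuous real potential on a $\mathcal P$-set, not that your particular $p_{U_s}^0$ is strict. Strictness at the glued point~$x_0$ needs its own argument.

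The paper sidesteps both difficulties by \emph{not} requiring the new potentials to agree with the old~$p_V$ all the way up to~$K$. It works with sections $M\in\Gamma^+(X,\Q)$ in the sheaf of germs of potentials, multiplies~$M$ by a continuous cutoff~$\vp$ (via the kernel $f\mapsto f\circ p$) that vanishes on $K\cup A_{r/2}$ and equals~$1$ outside $K\cup A_{3r/4}$, and then adds a section~$M''$ supported in~$U_r$, built from an abstract strict potential~$q$ on~$U_r$ whose existence \emph{is} Proposition~\ref{equiv-potential}. Strictness of $M_0=M'+M''$ then follows from the local fine-support criterion in~\cite{H-strict}, and the equality of finite-dimensional distributions on $X\setminus(K\cup\ov{A_r})$ comes from Theorem~4.16 of~\cite{H-pert} rather than from an ad hoc exhaustion argument. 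The transition zone between $A_{r/2}$ and $A_{3r/4}$ is precisely what buys the freedom to glue without having to solve your constraint on the~$c_s$.
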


\begin{proof} Let us recall that, for all $U\in \U$ and $p\in \mathcal P(U)\cap C(U)$,
there exists a unique kernel $K_p$ on~$U$ such that $K_p1=p$ and, for every $f\in \B_b^+(U)$,
$f\circ p:=K_pf$ is a~potential on~$U$ which is harmonic outside the support of $f$.

In the terminology of \cite{H-pert}, $(p_V)_{V\in \V}$ corresponds to some
  $M\in\Gamma^+(X,\Q)$, a section in the sheaf of germs of   potentials on~$X$, which
is strict.
Let $\vp\in \C(X)$, $0\le \vp\le 1$ on~$X$, $\vp=0$ on~$K\cup A_{r/2}$,
$\vp=1$ on~$X\setminus (K\cup A_{3r/4})$, and let $M':=\vp\circ M$, that is, $M'$ is the section corresponding
to $(\vp\circ p_V)_{V\in \V}$. Then $M'=0$ on~$K\cup A_{r/2}$, and we may consider $M'$ as element of $\Gamma^+(X_0,\Q_0)$
which vanishes on $U_{r/2}$.

Further, let $q\in \C(U_r)$ be a strict potential on~$U_r$ (with respect to $(H_V)_{V\in \V_0}$),
and let $M''\in \Gamma^+(X_0,\Q_0)$ be the section corresponding to $\vp_0\circ q$, where $\vp_0(x_0):=1$
and $\vp_0:=1-\vp$ on~$X\setminus K$. Then $M''=0$ on~$X_0\setminus U_r$ and $M_0:=M'+M''\in \Gamma^+(X_0,\Q_0)$
is strict, by~\cite{H-strict}.

 By Theorem \ref{kernel-to-diff}, there exists a~Bauer diffusion $\mathfrak X_0$
on~$X_0$ corresponding to $M_0$. An application of results in \cite{H-pert}, in particular Theorem 4.16,  completes the proof.
\end{proof} 

\begin{remark} {\rm
In  Example \ref{simple-ex} we may achieve
that the restrictions of~$\mathfrak X$ and~$\mathfrak X_0$ on $X\setminus (K_1\cup\dots\cup K_m)$ 
have the same finite-dimensional
distributions provided  each~$K_j$ is a closed ball 
in $X_j$ with center $x_j$  (we already observed in Remark \ref{simple-sigma}
that then a straightforward choice of compatible probability measures~$\sigma_r$, $0<r<1$,  is possible; but this will be 
of no importance here). 

Let $1\le j\le m$, let~$\V_j$ be the set  of all open balls $B$ with $\ov B\subset X_j$ and
let $p_V$, $V\in \V_j$, be given by the expected exit times for Brownian motion on $\real^{d_j}$.
Further,  let $\W_j$ denote  the set of all $B\in \V_j$ with center  $x_j$ which contain  $K_j$. 
If $\g_0>0$ is sufficiently small,  then, for every $0<\g<\g_0$, there exist $W_j(\g)\in \W_j $  such that 
\begin{equation*}
                                 p_{W_j(\g)}   =\g   \on  \partial K_j .
\end{equation*} 
The sets 
\begin{equation*} 
                              W(\g):=\{x_0\}\cup \bigcup\nolimits_{1\le j\le m} (W_j(\g)\setminus K_j), \qquad 0<\g <\g_0,
\end{equation*} 
form a~fundamental system $\W_0$ of open neighborhoods of $x_0$ in $X_0$ and the functions 
\begin{equation*}
                                 p_{W(\g)}(x):=\begin{cases}
                                                  \g,&\quad x=x_0,\\
                                                  p_{W_j(\g)}(x),&\quad x\in W_j(\g)\setminus K_j. 
                                                 \end{cases}
\end{equation*} 
are continuous strict potentials on $W_\g$. Finally, let
\begin{equation*}
                                   \V:=\W_0\cup\bigcup\nolimits_{1\le j\le m}\{V\in \V_j\colon V_j\cap K_j=\emptyset\}.
\end{equation*} 
Then an application of Theorem \ref{kernel-to-diff} and  results in \cite{H-pert}  yields a~Bauer diffusion $\mathfrak X_0$
on~$X_0$ having the desired properties.
}
\end{remark}

\section{Proof of Theorem \ref{main}}\label{main-proof}

Let us first note that, for all $0 <t<1$ and $x\in A_t$,   
\begin{equation}\label{hAr} 
         H_{A_t}(x, S_t)  =g(x)/t,\qquad
          H_{A_t}(x, \partial K)=1-         H_{A_t}(x, S_t)  =1-(g(x)/t). 
\end{equation} 
In particular, for every $0<r<t$ and every  probability measure $\sigma$ on~$S_r$,  
\begin{equation}\label{sat}
\sigma H_{A_t} (S_{t}) :=\int H_{A_t}(x,S_t)\,d\sigma(x)=r/t.
\end{equation}

\begin{proof}[Proof of Theorem \ref{main}]   For the moment, let us fix  $h\in \C(X_0)$ and $0<t<1$. Then
$\lim_{x\in A_t, x\to z} h(x)=h(x_0)$ for every $z\in \partial K$. So, by (\ref{hAr}), $h|_{A_t}\in \H(A_t)$ 
if and only if 
  \begin{equation}\label{hy}
         h(x)= H_{A_t} (1_{S_t} h)(x)+     (1-t\inv g(x)) h(x_0) \qquad\mbox{ for every } x\in A_t.
\end{equation}         

(1)  Let us  suppose now that $(a)$ in  $(1)$ holds. Let  $0<t<1$,
 $f\in \C(X)$ and define $h:=H_{U_t} f$. Then,  by (\ref{mx0}),
\begin{equation*} 
h(x_0)=\int f\,d\sigma_t.
\end{equation*} 
 Moreover,  $h|_{A_t}\in \H_0(A_t)=\H(A_t)$, 
and hence (\ref{hy}) holds. Since $h=f$ on~$S_t$, this proves (\ref{HUr}).

Further, let us consider   $0<r< t<1$. By $(H_1)$, $H_{U_r}h=h$, and hence
 $ h(x_0)=\int h\,d\sigma_r$,  by (\ref{mx0}).
Integrating (\ref{hy}) with respect to $\sigma_r$ we obtain that
\begin{equation*}
 h(x_0)=\int h\,d\sigma_r=\int 1_{S_t}f d(\sigma_r H_{A_t}) + (1-(r/t)) h(x_0),
\end{equation*} 
and therefore
\begin{equation*} 
 \int (1_{S_t}f) d(\sigma_r H_{A_t})= (r/t) h(x_0)=(r/t) \int f\,d\sigma_t.
\end{equation*} 
 Thus (\ref{compatible}) holds. 

Next let us suppose conversely that (b) holds. For $0<t<1$, we (have to) define   boundary kernels $H_{U_t}$ on~$X_0$  by
(\ref{HUr}), that is, for all $f\in \B_b(X)$,   $H_{U_t} f=f$ on~$X_0\setminus U_t$ and
\begin{equation*} 
    H_{U_t}f(x)=\begin{cases} \int f\,d\sigma_t,&\quad x=x_0, \\
                               H_{A_t}(1_{S_t} f)(x)+   (1- (g(x)/t)) \int f\,d\sigma_t,&\quad x\in A_t.
                      \end{cases}
\end{equation*} 
Then $H_{U_t}f$ is harmonic on $A_t$. 
For every $z\in \partial K$,   
\begin{equation*} 
\lim_{x\to z}  H_{A_t}(1_{S_t} )(x)=\lim_{x\to z} g(x)/t = 0, 
\end{equation*} 
 and hence  
$H_{U_t}f$ is continuous at $x_0$. Moreover, $H_{A_t}(1_{S_t} f)$ is continuous on~$A_t$ and, if $f$ is continuous,
tends to~$f$ at~$S_t$. Therefore $H_{U_t}$ satisfies $(H_2)$ and $(H_4)$. 

If $0<r<t$ and $h:=H_{U_t}f$, then
\begin{equation*}
H_{U_r}h(x_0)=\int h\,d\sigma_r=\sigma_rH_{A_t}(1_{S_t} f)+(1-(r/t))\int f\,d\sigma_t=\int f\,d\sigma_t=h(x_0),
\end{equation*} 
by (\ref{compatible}). So $(H_1)$ holds. 

Of course, $\{x_0\}$ is strongly stable, since $g$ is a strictly positive harmonic function on~$W_0\setminus K$ and 
$\lim_{x\to x_0} g(x)=0$. To prove that $U_1:=\{x_0\}\cup (W_0\setminus K)$ is a $\mathcal P$-set,
and hence $(H_3)$ holds, we define  a~function $w_0$ on~$U_1$ by~$w_0(x_0)=1$ and $w_0=1-g$
on~$W_0\setminus K$ and consider
\begin{equation*}
\W(x):=\begin{cases}
\{U_r\colon 0<r<1\},&\quad x=x_0,\\
\{V\in \V(W_0\setminus K)\colon x\in V\},&\quad x\in W_0\setminus K.
\end{cases} 
\end{equation*} 
Clearly, $w_0\in \H_{\W}^\ast(U_1)$. Now let $x_1,x_2\in U_1$, $x_1\ne x_2$. 
 Of course, $w_0(x_1)\ne w_0(x_2)$ unless $x_1,x_2\in S_r$ for some $0<r<1$. 
So suppose that $0<r<1$ and $x_1,x_2\in S_r$. By assumption, $W_0$ is a $\mathcal P$-set for $(X,\H)$. Hence there exists
a~function $v\in {}^+\H^\ast(W_0)$ such that $v(x_1)\ne v(x_2)$. Without loss of generality 
$v\le r$ (take $a>v(x_1)\wedge  v(x_2)$ and replace $v$ by $r\wedge (r v/a)$). Let us define a function~$w_1$ on~$U_1$
by $w_1(x_0):=1$ and $w_1:= (w_0+v)\wedge1$ on~$W_0\setminus K$. Then $w_1\in \H_{\W}^\ast(U_1)$. Since $w_0=1-r$ on~$S_r$
and $v\le r$, we conclude that $w_1(x_1)=(1-r) +v(x_1) \ne (1-r) + v(x_2)=w(x_2)$.

Finally, to prove the equivalence of (a), (b) and (c) in (2),  let $W$ be an open set in $X_0$ and $0<t<1$ such that $\ov U_t\subset W$.
Moreover, let $h\in \C(W)$ such that $\int h\,d\sigma_t=h(x_0)$ and $h $ is harmonic on~$W\setminus \{x_0\}$. 
Then  $h(x_0)=H_{U_t}h(x_0)$, by (\ref{mx0}),  and both $h$ and $H_{U_t}h$ are harmonic on~$A_t$. Therefore $H_{U_t}h=h$ also on~$A_t$. 
In particular, $h|_{U_t}=(H_{U_t}h)|_{U_t}\in \H_0(U_t)$. 
So $h\in \H_0(W)$, by (ii). This shows that (c) implies (a). The implications $(a)\Rightarrow (b)\Rightarrow (c)$ are trivial.
\end{proof}

\begin{remark}{\rm
 It is a matter of taste if darning and gluing of pairwise disjoint strongly stable compacts $K_1,\dots,K_m$ is done in one step
or if, one at a time, each~$K_j$ is identified with a~(stable) point $x_j$, and then  gluing of the $m$~points $x_1,\dots,x_m$
(darning of $\{x_1,\dots,x_m\}$)  is performed.
}
\end{remark}

\section{Proof of Proposition \ref{ex-comp}}\label{section-compatible}

We begin with a~simple observation based on $(H_1)$.

\begin{lemma}\label{partial-compatible}
Let $0<\eta<1$ and let $\nu$
be any probability measure on~$S_{\eta}$. 
Then the   measures $\nu_r:=(r/\eta) (\nu H_{A_r})|_{S_r}$,  $\eta<r<1$, are compatible probability measures.
\end{lemma}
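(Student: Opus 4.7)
The plan is to prove both the probability-measure property and the compatibility (\ref{compatible}) directly from the identity $H_{A_r}H_{A_t}=H_{A_t}$ for nested regular sets, together with the fact that $\partial K$ is ``absorbing'' for every $H_{A_t}$.

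First I would check that each $\nu_r$ is a probability measure. Since $\eta<r$, the support $S_\eta$ of $\nu$ lies in $A_r$, and by (\ref{sat}) applied to $\nu$ (a probability measure concentrated on $S_\eta$) one gets $\nu H_{A_r}(S_r)=\eta/r$. Hence $\nu_r(S_r)=(r/\eta)\cdot(\eta/r)=1$.

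For the compatibility fix $\eta<r<t<1$. Since the sets $A_r$ and $A_t$ are regular (already noted in the paper, by the barrier criterion) and $A_r\subset A_t$, the compatibility $H_VH_U=H_U$ for regular sets gives
\begin{equation*}
\nu H_{A_r}H_{A_t}=\nu H_{A_t}.
\end{equation*}
The exit measure $H_{A_r}(x,\cdot)$ for $x\in A_r$ is carried by $\partial A_r=S_r\cup \partial K$, so I split
\begin{equation*}
\nu H_{A_r}=(\nu H_{A_r})|_{S_r}+(\nu H_{A_r})|_{\partial K}.
\end{equation*}
Because $\partial K\subset A_t^c$, the boundary-kernel definition forces $H_{A_t}(x,\cdot)=\ve_x$ on $\partial K$, so the second summand is transported unchanged and remains concentrated on $\partial K$, contributing nothing to $S_t$. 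Restricting the displayed identity to $S_t$ therefore yields
\begin{equation*}
\bigl((\nu H_{A_r})|_{S_r}\,H_{A_t}\bigr)\bigl|_{S_t}=(\nu H_{A_t})|_{S_t}.
\end{equation*}

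Multiplying through by $r/\eta$ and using the definitions of $\nu_r$ and $\nu_t$ gives
\begin{equation*}
\nu_r H_{A_t}|_{S_t}=(r/\eta)(\nu H_{A_t})|_{S_t}=(r/\eta)\cdot(\eta/t)\,\nu_t=(r/t)\,\nu_t,
\end{equation*}
which is precisely (\ref{compatible}). No real obstacle is expected; the only care needed is the bookkeeping that ensures $\partial K$-mass does not leak into $S_t$, which is why restricting $\nu H_{A_r}$ to $S_r$ before applying $H_{A_t}$ is harmless on $S_t$.
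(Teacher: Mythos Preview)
Your proof is correct and follows essentially the same approach as the paper's: both use~(\ref{sat}) for the probability-measure claim and then derive compatibility from $H_{A_r}H_{A_t}=H_{A_t}$ together with the observation that the $K$-part of $\nu H_{A_r}$ (the paper writes $K$ where you write $\partial K$, which makes no difference) is fixed by $H_{A_t}$ and hence contributes nothing on~$S_t$. The paper simply compresses your argument into a single chain of equalities.
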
 

\begin{proof} 
 Let $\eta<r<t<1$. By (\ref{sat}), $\nu_r$ is a probability measure. Since $\nu H_{A_r}$ is supported by $K\cup S_r$
and $K\subset A_t^c$, we know that $((\nu H_{A_r})|_K)H_{A_t}=(\nu H_{A_r})|_K$, and hence
$  (\eta/ r) \nu_r H_{A_t}|_{S_t} = (\nu H_{A_r}|_{S_r}) H_{A_t}|_{S_t} = (\nu H_{A_r} H_{A_t} )|_{S_t} =(\nu H_{A_t} )|_{S_t} 
 =( \eta/ t) \nu_t$.
 \end{proof}

\begin{proof}[Proof of Proposition \ref{ex-comp}]
For $m\in\nat$, every sequence of probability measures on~$S_{\eta_m}$ contains 
a~weakly convergent subsequence. The measures $(\eta_m/\eta_n) (\nu_n H_{A_{\eta_m}})|_{S_{\eta_m}}$, $n>m$, 
are probability measures,  by Lemma \ref{partial-compatible}.  So an obvious diagonal procedure
yields natural numbers $n_1<n_2<n_3<\dots$ such that, for every~$m\in\nat$,
the measures 
\begin{equation*}
          \nu_{m,k}:=(\eta_m/\eta_{n_k}) (\nu_{n_k} H_{A_{\eta_m}})|_{S_{\eta_m}}, \qquad k>m,
\end{equation*} 
weakly converge to a~probability measure $\mu_m$  on~$S_{\eta_m}$ as $k\to \infty$. 

Now let us fix $0<r<t<1$ and $m\in\nat$ such that $\eta_m<r$.  By Lemma \ref{partial-compatible},
\begin{equation*} 
                                \sigma_{r,k}=(r/\eta_m) (\nu_{m,k}H_{A_r})|_{S_r}  \und     \sigma_{t,k}=(t/r)( \sigma_{r,k}H_{A_t})|_{S_t}.
\end{equation*}  
Letting $k\to\infty$ we obtain $\sigma_r=(r/\eta_m) (\mu_mH_{A_r})|_{S_r}$ and $\sigma_t=(t/r)(\sigma_rH_{A_t})|_{S_t}$.
\end{proof}

\section{Appendix: Stable and  strongly stable sets}\label{appendix}

It may be instructive to look first at some examples in 
$X:=\real^d$, $d\ge 3$, for the classical case.
For $n\in\nat$,  let 
\begin{equation*}
 x_n:=(2^{-n}, 0,\dots,0), \quad  B_n:= \{x\in\real^d\colon |x|<2^{-(n+2)} \}, \quad A_n:=\ov B_n\setminus B_{n+1}.
\end{equation*} 
The set
\begin{equation*}
            K:=\{0\}\cup \bigcup\nolimits_{n\in\nat}  (x_n+\ov B_n) 
\end{equation*} 
is strongly stable. Taking $W_0:=\{x\in\real^d\colon |x|<1\} $ and $g:=H_{W_0\setminus K} 1_{\partial W_0}$
the number of connected components of $A_r=\{0<g<r\}$, $0<r<1$,
increases to $\infty$ as $r\to 0$. For every $m\in\nat$, 
\begin{equation*} 
                   L_m:=\{0\}\cup \bigcup\nolimits_{n\le m}  (x_n+A_n) \cup \bigcup\nolimits_{n> m} (x_n+\ov B_n) 
\end{equation*} 
is (still) strongly stable. However,
\begin{equation*} 
                   L:=\{0\}\cup \bigcup\nolimits_{n\in\nat} (x_n+A_n)  
\end{equation*} 
is stable,  but not strongly stable, since $\real^d\setminus L$ has infinitely many connected components.

For the heat equation on~$\real^d\times \real$, there are no stable compacts $K\ne\emptyset$
(compacts~$K$ are thin at the points $(x,t)\in K$ with minimal $t$). 

However, this does not 
mean that our assumptions only hold in  the elliptic case. Consider, for example, the closed upper
half space $X:=\{x\in \real^d\colon x_d\ge 0\}$, $d\ge 2$, with Brownian motion on $X^+:=\{x\in \reald \colon x_d>0\}$
which continues as Brownian motion on the hyperplane $H:=X\setminus X^+$ when exiting from $X^+$.
By symmetry, it is easy to write down explicitly the corresponding harmonic kernels for the half balls 
\begin{equation*} 
B_r^+(x):=\{ y\in X\colon |y-x|<r\}, \qquad x\in H, \, r>0,
\end{equation*} 
using the Poisson integral for balls in $\real^d$ and $\real^{d-1}$. The situation is not elliptic, since, for example,
  $H_{B_1^+(0)}(0,\cdot)$ is supported by $\{z\in H\colon |z|=1\}$.

From now on,  let us assume in this section that we have a normal Bauer family $(H_V)_{V\in \V}$ of harmonic boundary 
kernels on $X$ which is elliptic. 
The   proofs of Lemma~\ref{point},  Proposition \ref{LW-stable} and Corollary  \ref{K-U-regular},(1) 
are strongly influenced by \cite[Chapitre I,7]{Herve}.

\begin{lemma}\label{point}
Let $x\in X$ and $V$ be an open neighborhood of $x$. Then there exists 
a~strongly stable compact neighborhood $K$ of $x$ in $V$.
\end{lemma}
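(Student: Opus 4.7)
The plan is to take $K$ as $\ov V_1$ for a sufficiently small regular set $V_1\in\V$ containing~$x$, and then exhibit the required harmonic function as the harmonic measure of the outer boundary of a slightly larger ambient regular set.

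Concretely, I would choose $V_0,V_1\in\V$ with $x\in V_1$, $\ov V_1\subset V_0$, and $\ov V_0\subset V$, and set $K:=\ov V_1$ and $W:=V_0$. Then $K$ is a compact neighborhood of $x$ contained in $V$ and $W$ is a regular $\mathcal P$-set containing $K$. To verify strong stability it suffices to produce $h\in\H(V_0\setminus K)$ with $h>0$ and $h\to 0$ at $\partial K=\partial V_1$; the candidate I would use is $h(y):=H_{V_0\setminus\ov V_1}(y,\partial V_0)$, which makes sense once $V_0\setminus\ov V_1$ is an $\H$-regular set.

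To prove regularity of $V_0\setminus\ov V_1$ I would appeal to the barrier criterion quoted after Proposition~\ref{equiv-potential}. At points $z\in\partial V_0$ the barrier producing regularity of $V_0$ itself restricts to a barrier on $(V_0\setminus\ov V_1)\cap W_z\subset V_0\cap W_z$. At points $z\in\partial V_1$ (the delicate case) I need an exterior barrier: a strictly positive function in $\H^\ast((V_0\setminus\ov V_1)\cap W_z)$ that tends to $0$ at~$z$. Here the ellipticity assumption standing throughout this section is essential. The cleanest route is to pass via Theorem~\ref{kernel-to-diff} to the associated Bauer diffusion~$\mathfrak X$, observe that ellipticity together with the regularity of $V_1$ from inside forces $T_{\ov V_1}=0$ $\mathbbm P^z$-a.s.\ starting at $z\in\partial V_1$, and then translate this back to the existence of the required exterior barrier via the probabilistic reformulation of the criterion noted in Section~\ref{intro}.

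With $V_0\setminus\ov V_1$ regular, $h$ is harmonic on $V_0\setminus\ov V_1$ and continuous up to the closure, with $h=1$ on $\partial V_0$ and $h=0$ on $\partial V_1$. Strict positivity of $h$ on every connected component of $V_0\setminus\ov V_1$ then follows from ellipticity, provided each such component meets~$\partial V_0$; I can arrange this by choosing $V_1$ small enough that $V_0\setminus\ov V_1$ is connected, using the local connectedness of~$X$ guaranteed by $(H_4')$. The main obstacle will be the exterior barrier at points of $\partial V_1$, which is the only place where ellipticity is indispensable; once it is in hand, the rest of the argument runs routinely on the Bauer framework already in place.
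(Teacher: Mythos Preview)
Your approach has a genuine gap at the exterior barrier step. You claim that ``ellipticity together with the regularity of $V_1$ from inside forces $T_{\ov V_1}=0$ $\mathbbm P^z$-a.s.'' for $z\in\partial V_1$, but this inference is invalid: interior regularity of $V_1$ at $z$ says that $V_1^c$ is not thin at $z$, which is a statement about the \emph{complement} of $V_1$, not about $\ov V_1$ itself. Concretely, in $\real^3$ take $V_1$ to be the interior of a Lebesgue spine, say $V_1=\{(x_1,x_2,x_3): 0<x_1<1,\ x_2^2+x_3^2<e^{-1/x_1}\}$. Then $V_1$ is a connected regular open set (its complement is massive near the tip $0$), yet $\ov V_1$ is thin at $0\in\partial V_1$, so no exterior barrier exists there. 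In the abstract setting nothing prevents the base $\V$ from containing sets of this kind; indeed, Lemma~\ref{point} is precisely the tool the paper uses to \emph{produce} neighborhoods with stable closure, so you cannot assume this property of $\ov V_1$ at the outset. Your connectedness argument for $V_0\setminus\ov V_1$ via local connectedness is also not substantiated, but this is secondary.

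The paper sidesteps the whole regularity-of-the-annulus question. It works inside a connected regular $V$ carrying a strict continuous potential $p\in\C_0(V)$, forms $q:=\vp\circ p$ for some $\vp\in\C(V)$ with compact support $L\ni x$ and $\vp(x)>0$, so that $q\in\C_0(V)$ is strictly positive (here ellipticity enters) and harmonic on $V\setminus L$. Choosing $0<\a<\inf_L q$ and setting $K:=\{q\ge\a\}$ gives a compact neighborhood of $x$, and $h:=\a-q$ is automatically harmonic and strictly positive on $V\setminus K$ with $h\to 0$ at $\partial K=\{q=\a\}$. No barrier at $\partial K$ needs to be verified separately, because the required harmonic function is built into the construction.
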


\begin{proof} We may assume without loss of generality that 
$V$ is a connected regular set and there exists 
a~strict potential $p\in\C_0(V)$. Let $\vp\in \C(V)$ with compact support~$L$
 in~$V$ such that $\vp(x)>0$. Then $q:=\vp\circ p\in\C_0(V)$, $q>0$,
and $q$ is harmonic on~$V\setminus L$. Let $0<\a<\inf q(L)$ and 
  define $K:=\{x\in V\colon q(x)>\a\}$, $h:=1_{V\setminus K}(\a-q)$.
\end{proof}

\begin{proposition}\label{LW-stable}
Let $L $ be a compact in a~$\mathcal P$-set $W$.
Then there exists a~stable compact neighborhood $K$ of $L$ in $W$. 
\end{proposition}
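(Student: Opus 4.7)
The plan is to reduce to Lemma~\ref{point} via a finite covering argument, then to glue the pieces together by taking a minimum of superharmonic barriers.

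First, I would apply Lemma~\ref{point} pointwise: for each $x\in L$, the $\mathcal P$-set $W$ is an open neighborhood of $x$, so Lemma~\ref{point} provides a strongly stable compact neighborhood $K_x$ of $x$ with $K_x\subset W$. Since $L$ is compact and $L\subset \bigcup_{x\in L}K_x^\circ$, extract a finite subcover indexed by $x_1,\dots,x_n\in L$ and set $K:=K_{x_1}\cup\dots\cup K_{x_n}$. Then $K$ is a compact subset of $W$ whose interior contains $\bigcup_i K_{x_i}^\circ\supset L$, so $K$ is a compact neighborhood of $L$ in $W$.

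Next, I would verify stability at an arbitrary $z\in\partial K$. Setting $I_z:=\{i\colon z\in K_{x_i}\}$, the condition $z\in K\setminus K^\circ$ forces $I_z\ne\emptyset$ and, for each $i\in I_z$, $z\in\partial K_{x_i}$ (otherwise $z$ would lie in $K_{x_i}^\circ\subset K^\circ$). Using that each $K_{x_i}$ with $i\in I_z$ is (strongly) stable at $z$, choose open neighborhoods $W_i\ni z$ and strictly positive functions $s_i\in\mathcal S^+(W_i\setminus K_{x_i})$ with $\lim_{x\to z}s_i(x)=0$. For $j\notin I_z$, the closed set $K_{x_j}$ misses $z$, so it admits an open neighborhood $N_j$ of $z$ disjoint from $K_{x_j}$. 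Setting $W':=\bigcap_{i\in I_z}W_i\cap\bigcap_{j\notin I_z}N_j$, one has $W'\setminus K\subset W_i\setminus K_{x_i}$ for every $i\in I_z$, so the function $s:=\min_{i\in I_z}s_i$ is well defined and superharmonic on $W'\setminus K$ (a finite minimum of superharmonic functions), strictly positive there, and satisfies $\lim_{x\to z}s(x)=0$ since $0\le s\le s_{i_0}$ for any fixed $i_0\in I_z$. This is the barrier required for stability of $K$ at $z$.

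The main obstacle is the gluing step: one has to check that the local barriers $s_i$ built by Lemma~\ref{point} combine into a single barrier for the union $K$, and that the combination respects superharmonicity. The minimum $\min_{i\in I_z}s_i$ handles both points cleanly, thanks to the standard fact that finite minima of superharmonic functions are superharmonic in a Bauer axiomatic setting. It is worth emphasising why the conclusion is only stability, not strong stability: Lemma~\ref{point} yields strongly stable pieces $K_{x_i}$, but the complement of their union in $W$ may have infinitely many connected components, so in general no single global harmonic barrier on $W\setminus K$ can be expected.
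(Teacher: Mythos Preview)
Your proof is correct and follows the same approach as the paper: apply Lemma~\ref{point} pointwise, extract a finite subcover, and take the union. The paper's proof is terser, simply declaring ``the proof is completed taking $K:=L_{x_1}\cup\dots\cup L_{x_m}$'' without spelling out why a finite union of stable compacts is stable; your barrier-gluing argument via $\min_{i\in I_z}s_i$ fills in exactly that detail.
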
 

\begin{proof} By Lemma \ref{point}, for every $x\in L$,  there exists a~stable compact neighborhood~$L_x$
of $x$ in $W$. There are $x_1,\dots,x_n\in L$ such that $L$ is covered by 
the open sets $\overset\circ L_j$, $1\le j\le m$. 
The proof is completed   taking  $ K:=L_{x_1}\cup\dots \cup L_{x_m}$.
\end{proof} 

\begin{corollary}\label{K-U-regular}
For every compact $K$ in a $\mathcal P$-set $W$, there exists a 
regular neighborhood  of $K$ in $W$.
\end{corollary}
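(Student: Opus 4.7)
My plan is to build the regular neighborhood directly from a strict potential and then invoke the barrier criterion for regular sets.

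First, I would reduce to the situation in which $W$ is a regular $\mathcal P$-set containing $K$ on which a strict continuous real potential $p\in\C_0(W)$ is available; this is the same reduction as made at the beginning of the proof of Lemma~\ref{point}. Because $p$ is strict it is strictly positive on $W$ (as recalled just before Proposition~\ref{equiv-potential}), so $m:=\min_K p>0$ by compactness of $K$. I then pick a constant $\a$ with $0<\a<m$ and set
\begin{equation*}
    U:=\{x\in W\colon p(x)>\a\}.
\end{equation*}
Then $U$ is an open neighborhood of $K$. Since $p\in\C_0(W)$, the closed superlevel set $\{p\ge\a\}$ is compact in $W$, so that $\ov U$ is a compact subset of the $\mathcal P$-set $W$.

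It remains to verify regularity of $U$, which I would do via the barrier criterion recalled in Section~\ref{intro}. For any $z\in\partial U$, continuity of $p$ gives $p(z)=\a$, and the function $s:=p-\a$ is strictly positive on $U$, is superharmonic on $U$ (because $p$ is superharmonic on $W$ and the constant function $\a$ is harmonic thanks to $H_V1=1$), and satisfies $\lim_{x\to z}s(x)=0$. Applied inside the $\mathcal P$-set $W$ this yields that $U$ is $\H$-regular, which is precisely a regular neighborhood of $K$ in $W$.

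The only delicate point, and the one I expect to be the main obstacle, is the preliminary reduction: arranging, after possibly shrinking the ambient $\mathcal P$-set, that a strict continuous potential lies in $\C_0$ on a neighborhood of $K$. Once that is in place, the rest of the argument is a direct application of the barrier criterion, entirely analogous to the construction of the set $W_0$ in Section~2 used for darning.
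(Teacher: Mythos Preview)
Your proposal has a genuine gap precisely where you feared: the preliminary reduction is circular. In Lemma~\ref{point} the reduction to a regular connected $V$ with a strict potential $p\in\C_0(V)$ is legitimate because for a \emph{single point} one may simply shrink $V$ to a set from the base $\V$, and every $V\in\V$ is regular. For a general compact $K$ there is no reason that $K$ fits inside any set of $\V$, and arranging a regular $\mathcal P$-set around $K$ is exactly the content of Corollary~\ref{K-U-regular}. So invoking ``the same reduction as in Lemma~\ref{point}'' assumes what is to be proved. Without that reduction, a strict continuous real potential $p$ on~$W$ furnished by Proposition~\ref{equiv-potential} need not lie in $\C_0(W)$, and then the superlevel set $\{p>\a\}$ is not known to be relatively compact in~$W$, so your barrier argument does not get off the ground.

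The paper's proof avoids this circularity by not trying to make the ambient set regular at all. Instead it fixes an arbitrary open $W'$ with $K\subset W'$ and $\ov{W'}\subset W$, applies Proposition~\ref{LW-stable} to the compact $\partial W'$ inside the $\mathcal P$-set $W\setminus K$ to obtain a stable compact neighborhood $L'$ of $\partial W'$, and then observes that $W'\setminus L'$ is regular: its boundary lies in $\partial L'$, and stability of $L'$ supplies the barriers. The key idea you are missing is to build stability around the \emph{boundary} $\partial W'$ (which can be covered by finitely many small stable compacts from Lemma~\ref{point}), rather than to try to produce a global potential in $\C_0$ on a neighborhood of $K$.
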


\begin{proof}  Let $W'$ be an open neighborhood of $K$ with $\ov W'\subset W$.
  By Proposition~\ref{LW-stable}, there exists a stable compact neighborhood $L'$ of $\partial W'$ in $W\setminus K$.
  Obviously, $W'\setminus L'$ is a regular  neighborhood of $K$ in $W$.
\end{proof} 

\begin{proposition}\label{components}
 Let $K$ be a stable compact in a $\mathcal P$-set $W$. Then $K$ is strongly stable
if and only if $K$ has at most finitely many holes, that is, there are only finitely 
many connected components $V$ of $W\setminus K$ which are relatively compact in $W$
{\rm(}or -- equivalently -- have a compact closure in $X$ with $\partial V\subset K${\rm)}.
\end{proposition}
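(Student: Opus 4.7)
The plan hinges on a structural observation: any connected component $V$ of $W\setminus K$ that is relatively compact in~$W$ with $\partial V\subset K$ is in fact a connected component of the whole of $X\setminus K$ (since $\partial V\subset K$ means $V$ is closed in $X\setminus K$, and it is of course open and connected there). Hence ``holes'' of~$K$ are intrinsically the connected components of $X\setminus K$ with compact closure in~$X$ and boundary in~$K$, and I will use this identification throughout.

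For the sufficiency direction, suppose $K$ has only the holes $V_1,\dots,V_k$ (possibly $k=0$). Using Lemma~\ref{point}, choose a strongly stable compact~$L_j$ inside each~$V_j$, and set $W^*:=W\setminus (L_1\cup\dots\cup L_k)$: this is an open $\mathcal P$-set containing~$K$. Corollary~\ref{K-U-regular} then yields a regular neighborhood~$U$ of~$K$ with $\ov U\subset W^*$, so $L_j\cap\ov U=\emptyset$ for each~$j$. The barrier criterion, applied using the stability of~$K$ at points of $\partial K$ and the regularity of~$U$ at points of $\partial U$, gives that $U\setminus K$ is regular. No component $C$ of $U\setminus K$ can be relatively compact in~$U$ with $\partial C\subset K$: the structural observation would force $C$ to coincide with some~$V_j$, so $L_j\subset V_j\subset U$, contradicting $L_j\cap U=\emptyset$. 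Thus every component of $U\setminus K$ meets $\partial U$. Since $K\subset U$ is open, $\partial U\cap K=\emptyset$, and $\partial(U\setminus K)$ splits as the disjoint union $\partial U\sqcup\partial K$; the continuous boundary datum~$f$ equal to~$1$ on $\partial U$ and~$0$ on $\partial K$ then produces, via the regularity of $U\setminus K$, a function $h:=H_{U\setminus K}f$ which is harmonic on $U\setminus K$, continuously vanishes on $\partial K$, and is strictly positive on $U\setminus K$ by ellipticity applied componentwise (each component carrying a boundary point where $f=1>0$). This exhibits $K$ as strongly stable via the $\mathcal P$-set $U$ and the function~$h$.

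For the necessity direction, let $h\in\H(W^*\setminus K)$ witness the strong stability, with $h>0$ and $h\to0$ at $\partial K$, and suppose for contradiction that $K$ has infinitely many holes~$V_n$. The crux is to locate some~$n$ with $V_n\subset W^*$: on such a hole, $h|_{V_n}$ extends continuously to~$0$ on $\partial V_n\subset\partial K$ over the compact set $\ov V_n$, so Proposition~\ref{min-principle} applied to~$-h$ on $V_n\subset W^*$ gives $h\le 0$ on~$V_n$, against $h>0$. To find such~$n$, one shrinks $W$ to a relatively compact $\mathcal P$-set neighborhood of~$K$ inside $W\cap W^*$, noting that intrinsic holes must accumulate at~$K$ (their boundaries lie in the compact set~$K$), so that infinitely many holes persist in the shrunken region and $\bigcup_n\ov V_n$ is relatively compact in~$X$. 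Now fix a relatively compact open neighborhood~$N$ of~$K$ with $\ov N\subset W^*$: if $V_n\not\subset N$ for infinitely many~$n$, picking $q_n\in V_n\setminus N$ and extracting a convergent subsequence $q_{n_k}\to q^*\in X\setminus N$ leads to a contradiction, because $q^*$ cannot lie inside any single component of $X\setminus K$ --- a connected neighborhood of~$q^*$ inside such a component would intersect infinitely many of the disjoint~$V_n$, forcing them to coincide --- so $q^*\in K\subset N$, contrary to $q^*\in X\setminus N$.

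The main obstacle is the topological sub-claim in the necessity direction, namely that infinitely many disjoint intrinsic holes with compact closures in a relatively compact region of~$X$ must eventually be swallowed by every open neighborhood of~$K$. The dichotomy ``accumulation happens in $K$ or in a single component of $X\setminus K$'' --- the latter being precluded by disjointness --- is what drives the argument, and choosing $N$ to be a neighborhood of the \emph{whole} of~$K$ (not just of an accumulation point) is essential so that $K\subset N$ produces the final contradiction. The sufficiency direction is by contrast a routine Dirichlet-problem construction, once one notices that puncturing~$W$ inside each hole removes those holes from the regular neighborhoods of~$K$ available in the punctured space.
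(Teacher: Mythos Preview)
Your sufficiency argument is correct and matches the paper's approach closely: you puncture each hole (by a strongly stable compact $L_j$, where the paper simply removes one point), invoke Corollary~\ref{K-U-regular} to obtain a regular neighbourhood $U$ of $K$ in the punctured $\mathcal P$-set, and then take $h=H_{U\setminus K}1_{\partial U}$, using ellipticity and the fact that every component of $U\setminus K$ reaches $\partial U$ to get $h>0$. The only difference is cosmetic.

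In the necessity direction there is a genuine gap. You assert that, after ``shrinking $W$'', the union $\bigcup_n\ov V_n$ is relatively compact, and then pick $q_n\in V_n\setminus N$ and extract a convergent subsequence. But the claim that ``infinitely many holes persist in the shrunken region'' is precisely what needs to be shown --- an intrinsic hole $V$ has $\partial V\subset K$, yet $V$ itself may extend far from $K$, so there is no a~priori reason why all but finitely many holes lie in a given neighbourhood of~$K$. Without that, the points $q_n\in V_n\setminus N$ need not lie in any compact, and the subsequence extraction fails.

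The repair is immediate and is exactly the mechanism behind Lemma~\ref{finite} (which the paper invokes verbatim): since each $V_n$ is connected, meets $N$ (near $\partial V_n\subset K\subset N$) and, under your hypothesis, meets $X\setminus N$, it must meet the \emph{compact} set $\partial N$. Choose $q_n\in V_n\cap \partial N$ instead of $q_n\in V_n\setminus N$; then a convergent subsequence $q_{n_k}\to q^*\in\partial N\subset X\setminus K$ exists, and your contradiction (the limit lies in a single component, forcing $V_{n_k}$ to coincide for large~$k$) goes through. With this correction your argument is complete and equivalent to the paper's. In fact, once so amended, the intermediate step ``some hole lies in $W^*$, apply the minimum principle to $-h$'' becomes superfluous: the accumulation contradiction alone already shows that only finitely many holes can fail to lie in $N$, hence only finitely many holes exist.
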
 

\begin{proof} Suppose first that $K$ has at most finitely many holes
and let $W'$ be obtained from $W$ omitting one point from each hole.
 By Corollary \ref{K-U-regular}, there is  a~regular neighborhood~$U$ of~$K$ in~$W'$. 
Then $\partial U$ intersects every connected component of~\hbox{$X\setminus K$}.
 Hence $h:=H_{U\setminus K}1$ is strictly positive on $U\setminus K$ (and tends to zero at $\partial K$).

The converse follows by the same argument as in the proof of Lemma \ref{finite}.
\end{proof} 

\bibliographystyle{plain}

  % \bibliography {lit-chen}
  %  \end{document}

{\small \noindent 
Wolfhard Hansen,
Fakult\"at f\"ur Mathematik,
Universit\"at Bielefeld,
33501 Bielefeld, Germany, e-mail:
 hansen$@$math.uni-bielefeld.de}

\end{document}